\newtheorem{thm}{Theorem}[section]
\newtheorem{prob}{Problem}[section]
\newtheorem{claim}{Claim}
\newtheorem{lem}{Lemma}[section]
\theoremstyle{definition}
\begin{document}
\title{Spectral radius and spanning trees of graphs\footnote{Supported by National Natural Science Foundation of China
{(Nos. 11971445 and 12261032)} and Natural Science Foundation of Henan Province (No. 202300410377).}}
\author{{\bf Guoyan Ao$^{a, b}$}, {\bf Ruifang Liu$^{a}$}\thanks{Corresponding author.
E-mail addresses: aoguoyan@163.com, rfliu@zzu.edu.cn, yuanjj@zzu.edu.cn.},
{\bf Jinjiang Yuan$^{a}$}\\
{\footnotesize $^a$ School of Mathematics and Statistics, Zhengzhou University, Zhengzhou, Henan 450001, China} \\
{\footnotesize $^b$ School of Mathematics and Statistics, Hulunbuir University, Hailar, Inner Mongolia 021008, China}}
\date{}

\date{}
\maketitle
{\flushleft\large\bf Abstract}
For integer $k\geq2,$ a spanning $k$-ended-tree is a spanning tree with at most $k$ leaves.
Motivated by the closure theorem of Broersma and Tuinstra [Independence trees and Hamilton cycles, J. Graph Theory 29 (1998) 227--237],
we provide tight spectral conditions to guarantee the existence of a spanning $k$-ended-tree in a connected graph of order $n$ with extremal graphs being characterized.
Moreover, by adopting Kaneko's theorem [Spanning trees with constraints on the leaf degree, Discrete Appl. Math. 115 (2001) 73--76],
we also present tight spectral conditions for the existence of a spanning tree with leaf degree at most $k$ in a connected graph of order $n$ with extremal graphs being determined, where $k\geq1$ is an integer.

\begin{flushleft}
\textbf{Keywords:} Spanning tree, Spectral radius, Closure, Leaf degree

\end{flushleft}
\textbf{AMS Classification:} 05C50; 05C35

\section{Introduction}
In this paper, we consider simple, undirected and connected graphs.
For undefined terms and notions, one can refer to \cite{Bondy2008} and \cite{Brouwer2011}.
Let $G$ be a graph with vertex set $V(G)$ and edge set $E(G)$.
The order and size of $G$ are denoted by $|V(G)|=n$ and $|E(G)|=e(G)$, respectively.
We denote by $d_{G}(v)$, $\omega(G)$, $i(G)$ and $\overline{G}$ the vertex $v$ of degree in $G$,
the clique number, the number of isolated vertices and the complement of $G,$ respectively.
We use $P_{n}$, $C_{n}$, $K_{n}$ and $K_{m,n}$ to denote the path of order $n$, the cycle of order $n$, the complete graph of order $n$,
and the complete bipartite graph with bipartition $(X,Y)$, where $|X|=m$ and $|Y|=n$.
Let $G_{1}$ and $G_{2}$ be two vertex-disjoint graphs. We denote by $G_{1}+G_{2}$ the disjoint union of $G_{1}$ and $G_{2}.$
The join $G_{1}\vee G_{2}$ is the graph obtained from $G_{1}+G_{2}$ by adding all possible edges between $V(G_1)$ and $V(G_2)$.

Let $A(G)$ and $D(G)$ be the adjacency matrix and diagonal degree matrix of $G$.
Let $Q(G)=D(G)+A(G)$ be the signless Laplacian matrix of $G$.
The largest eigenvalues of $A(G)$ and $Q(G)$, denoted by $\rho(G)$ and $q(G)$,
are called the spectral radius and the signless Laplacian spectral radius of $G$, respectively.
For convenience, Liu, Lai and Das \cite{Liu2019} introduced the nonnegative matrix $A_{a}(G)=aD(G)+A(G)~(a\geq 0)$ of a graph $G$.
The largest eigenvalue of $A_{a}(G)$ is called the $A_{a}$-spectral radius of $G$, denoted by $\rho_{a}(G)$.
It is clear that $\rho_{0}(G)=\rho(G)$ and $\rho_{1}(G)=q(G).$

The concept of closure of a graph was used implicitly by Ore \cite{Ore1960},
and formally introduced by Bondy and Chvatal \cite{Bondy1976}.
Fix an integer $l\geq 0$, the $l$-closure of a graph $G$ is the graph obtained from $G$
by successively joining pairs of nonadjacent vertices whose degree sum is at least $l$
until no such pair exists. Denote by $C_{l}(G)$ the $l$-closure of $G.$

For integer $k\geq2,$ a spanning $k$-ended-tree of a connected graph $G$ is a spanning tree with at most $k$ leaves.
Note that a spanning $2$-ended-tree is a Hamilton path.
Hence a spanning $k$-ended-tree of a graph is a natural generalization of a Hamilton path.
Fiedler and Nikiforov \cite{Fiedler2010} initially proved sufficient conditions for a graph to contain a Hamilton path
in terms of the spectral radius of a graph or its complement.
Ning and Ge \cite{Ning2015}, Li and Ning \cite{Li2016} improved and extended results in \cite{Fiedler2010}.
As a special case of spanning $k$-ended-tree, there are many sufficient conditions to assure a graph to contain a Hamilton path
(see for example \cite{Chvatal1972, Lu2012, Liu2015, Zhou2010, Zhou2017}). In fact, a Hamilton path can also be generalized to a spanning $k$-tree.
A spanning $k$-tree of a connected graph $G$ is a spanning tree in which every vertex has degree at most $k$, where $k\geq2$ is an integer.
Fan et al. \cite{Fan2021} presented spectral conditions for the existence of a spanning $k$-tree in a connected graph.

Broersma and Tuinstra \cite{Broersma1998} showed that the problem of whether a given connected graph contains a spanning $k$-ended tree is NP-complete.
Furthermore, they presented a degree sum condition to guarantee that a connected graph has a spanning $k$-ended tree.

\begin{thm}[Broersma and Tuinstra \cite{Broersma1998}]\label{th1}
Let $k\geq 2$ be an integer, and let $G$ be a connected graph of order $n$.
If $d(u)+d(v)\geq n-k+1$ for every two nonadjacent vertices $u$ and $v$, then $G$ has a spanning $k$-ended tree.
\end{thm}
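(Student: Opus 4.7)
The plan is to reduce Theorem~\ref{th1} to the complete graph via a Bondy--Chv\'atal-type closure argument tailored to spanning $k$-ended trees. I would work with the $(n-k+1)$-closure $C_{n-k+1}(G)$, which by the definition recalled just above the theorem is obtained from $G$ by repeatedly joining pairs of nonadjacent vertices whose degree sum is at least $n-k+1$, until no such pair remains. Under the hypothesis of the theorem, every nonadjacent pair of $G$ already meets the degree-sum bound, and since degrees only grow as edges are added, the process must terminate at $K_n$. Since $K_n$ admits a Hamilton path and therefore a spanning $k$-ended tree for every $k\geq 2$, it will suffice to establish the following \emph{closure lemma}: if $u,v$ are nonadjacent in $G$ and $d_G(u)+d_G(v)\geq n-k+1$, then $G$ has a spanning $k$-ended tree if and only if $G+uv$ does.

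Only the direction $G+uv\Rightarrow G$ in the closure lemma is nontrivial. Let $T$ be a spanning $k$-ended tree of $G+uv$; if $uv\notin E(T)$ one is done, so the main case is $uv\in E(T)$. Deleting this edge splits $T$ into subtrees $T_u\ni u$ and $T_v\ni v$ with $|V(T_u)|+|V(T_v)|=n$. My aim will be to locate an edge $xy\in E(G)$ with $x\in V(T_u)$ and $y\in V(T_v)$ such that $T':=(T-uv)+xy$ has at most $k$ leaves. Only the $T$-degrees of $u,v,x,y$ change when passing from $T$ to $T'$, so the leaf count can shift only at these four vertices; the most favourable swaps are those in which $x,y$ are non-leaves of $T$, for then they remain non-leaves in $T'$, while new leaves can appear at $u$ or $v$ only when $d_T(u)=2$ or $d_T(v)=2$.

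I would extract a suitable swap edge from the degree-sum bound by a P\'osa-style rotation argument, comparing the cross-neighbourhoods $N_G(u)\cap V(T_v)$ and $N_G(v)\cap V(T_u)$ after indexing $V(T_u)$ and $V(T_v)$ by a depth-first order rooted at $u$ and $v$. The slack ``$-k+1$'' in the hypothesis is precisely what is needed to absorb up to $k$ leaves in the resulting tree. The hard part will be the case in which no single-edge swap is sufficient: there one must iterate the construction, performing a chain of rotations along a longer path in $T$, in direct analogy with the rotation--extension technique used for Hamilton paths in the case $k=2$, and bookkeeping carefully that each rotation does not increase the number of leaves beyond $k$. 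Once the closure lemma is established, Theorem~\ref{th1} follows immediately from $C_{n-k+1}(G)=K_n$.
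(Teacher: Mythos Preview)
This theorem is quoted in the paper as an external result of Broersma and Tuinstra \cite{Broersma1998}; the paper gives no proof of it, so there is nothing in the paper to compare your argument against.

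On the substance of your plan: the reduction ``closure lemma $\Rightarrow$ $C_{n-k+1}(G)=K_n$ $\Rightarrow$ done'' is fine, but the whole weight sits on the closure lemma at threshold $n-k+1$, and that is strictly stronger than the closure result the paper does quote (Theorem~\ref{le1}), which is only stability under the $(n-1)$-closure. Your sketch of the closure lemma is where the real gap lies. For $k=2$ the spanning $2$-ended tree is a Hamilton path and the classical P\'osa rotation applies verbatim; for $k\geq 3$ the tree $T$ is generally not a path, a ``depth-first order rooted at $u$'' does not give the linear indexing that the cross-neighbourhood counting needs, and it is unclear what a ``rotation'' means when $T$ branches. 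Concretely, the single swap $T'=(T-uv)+xy$ can create up to two new leaves (at $u$ and at $v$) while destroying at most two (at $x$ and $y$), and you have not shown that the degree-sum hypothesis forces the existence of a swap where the bookkeeping comes out right; the claim that the slack ``$-k+1$'' is ``precisely what is needed'' is asserted, not argued. The iterated-rotation step is left entirely unspecified. Before committing to this route you should either locate a reference establishing the $(n-k+1)$-closure property for spanning $k$-ended trees, or give a direct argument that does not pass through a closure stronger than Theorem~\ref{le1}.
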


Moreover, Broersma and Tuinstra \cite{Broersma1998} proved the following closure theorem for the existence of a spanning $k$-ended-tree,
which is crucial to our proof.

\begin{thm}[Broersma and Tuinstra \cite{Broersma1998}]\label{le1}
Let $G$ be a connected graph of order $n,$ and $k$ be an integer with $2\leq k \leq n-1.$
Then $G$ has a spanning $k$-ended-tree if and only if the $(n-1)$-closure $C_{n-1}(G)$ of $G$ has a spanning $k$-ended-tree.
\end{thm}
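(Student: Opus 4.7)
The plan is to prove the nontrivial direction of the equivalence by induction on the number of edges added during the closure operation. The forward direction is immediate because $G$ and $C_{n-1}(G)$ share the same vertex set and $E(G) \subseteq E(C_{n-1}(G))$, so any spanning $k$-ended-tree of $G$ is also a spanning $k$-ended-tree of $C_{n-1}(G)$. Hence the whole content lies in the following reduction step: if $u,v$ are nonadjacent in $G$ with $d_G(u)+d_G(v)\geq n-1$ and $G+uv$ has a spanning $k$-ended-tree, then so does $G$. Iterating this step along the edges added to form $C_{n-1}(G)$ yields the theorem.

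To establish the reduction, I would argue by contradiction. Suppose $G$ has no spanning $k$-ended-tree but $G+uv$ does, and among all spanning $k$-ended-trees of $G+uv$ choose $T$ minimizing $|E(T)\setminus E(G)|$. A spanning $k$-ended-tree of $G+uv$ that avoids $uv$ already lies in $G$, so our chosen $T$ must contain the edge $uv$. Deleting $uv$ splits $T$ into two subtrees $T_u\ni u$ and $T_v\ni v$, and the goal becomes to produce an edge $e\in E(G)$ joining these two subtrees such that $T':=T-uv+e$ is a spanning tree of $G$ with at most $k$ leaves, contradicting the nonexistence assumption on $G$.

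The natural candidate swaps are $e=uy$ with $y\in N_G(u)\cap V(T_v)$ and $e=xv$ with $x\in N_G(v)\cap V(T_u)$. Using $N_G(u)\cap V(T_u)\subseteq V(T_u)\setminus\{u\}$ together with the analogous containment for $v$, a direct count gives
\[
|N_G(u)\cap V(T_v)|+|N_G(v)\cap V(T_u)| \;\geq\; d_G(u)+d_G(v)-n+2 \;\geq\; 1,
\]
so at least one candidate edge exists in $G$. For the swap $e=uy$, the only degrees that change are those of $u,v,y$: $d(u)$ is unchanged, $d(v)$ drops by one, $d(y)$ rises by one. Consequently, new leaves can appear only at $v$ (if $d_T(v)=2$), and existing leaves can disappear only at $y$ (if $y$ is a leaf of $T$); an analogous description holds for $e=xv$. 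Whenever $d_T(u),d_T(v)\geq 3$, any admissible swap immediately gives a spanning tree of $G$ with at most $|L(T)|\leq k$ leaves, and we are done.

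The main obstacle will be the delicate case in which $d_T(v)=2$ (or, symmetrically, $d_T(u)=2$) while no available swap vertex happens to be a leaf of $T$; there a single exchange could raise the leaf count to $k+1$. I anticipate resolving this by a chained rotation inside $T_v$: starting from the unique $T$-neighbor $v'$ of $v$ in $T_v$, successively perform admissible tree-edge swaps along $T_v$, each time either reducing $|E(T)\setminus E(G)|$ (contradicting the minimal choice of $T$) or moving the troublesome degree-$2$ vertex further along the subtree. The degree sum bound $d_G(u)+d_G(v)\geq n-1$ forces $N_G(u)\cup N_G(v)$ to cover all but at most one vertex of $V(G)\setminus\{u,v\}$, which should guarantee that the chain eventually reaches a leaf of $T$ and delivers a swap that respects the leaf budget. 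Combining the rotation with the leaf-count bookkeeping then produces the desired spanning $k$-ended-tree of $G$, contradicting our standing assumption and closing the argument.
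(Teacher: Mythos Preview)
The paper does not prove this statement; Theorem~\ref{le1} is quoted verbatim from Broersma and Tuinstra \cite{Broersma1998} and used as a black box (in Lemma~\ref{le12} and in parts (i)--(iv) of Theorem~\ref{main1}). There is therefore no ``paper's own proof'' to compare your proposal against.

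Regarding your sketch on its own merits: the reduction to a single edge $uv$ and the choice of $T$ minimizing $|E(T)\setminus E(G)|$ is the right framework, and your count showing that at least one $G$-edge crosses the cut $(V(T_u),V(T_v))$ is correct. But the case you explicitly leave open---$d_T(v)=2$ (or $d_T(u)=2$) with no leaf available among the swap candidates---is the entire substance of the result, and the ``chained rotation'' you describe is not yet an argument. In particular, the observation that $N_G(u)\cup N_G(v)$ misses at most one vertex of $V(G)\setminus\{u,v\}$ does not by itself force the rotation to terminate at a leaf of $T$: the swaps you are allowed to make are constrained by the tree structure of $T_v$, and you have not exhibited any quantity that strictly decreases along the chain (your potential $|E(T)\setminus E(G)|$ equals $1$ throughout, so it cannot drop further without already producing a tree in $G$). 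To turn this into a proof you would need to specify exactly which tree edge is deleted at each step and track the leaf set through the sequence of swaps, or alternatively argue by a direct case analysis on $(d_T(u),d_T(v))\in\{1,2,\geq 3\}^2$ as in the original paper.
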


Inspired by the work of Broersma and Tuinstra \cite{Broersma1998}, we focus on the following interesting problem in this paper.

\begin{prob}
Let $G$ be a connected graph of order $n$ which has no a spanning $k$-ended-tree. What is the maximum spectral radius (or signless Laplacian spectral radius) of $G?$
Moreover, characterize all the extremal graphs.
\end{prob}

For more results on spanning $k$-ended-tree, one can refer to
\cite{Chen2020, Chen2014, Chen2019, Flandrin2008, Kyaw2009, Kyaw2011, Tsugaki2007, win1979}.
Inspired by the ideas on Hamilton path from Li and Ning \cite{Li2016} and using typical spectral techniques,
we prove tight spectral conditions to guarantee the existence of a spanning $k$-ended-tree in a connected graph.
Let $R(n,s)$ be an $s$-regular graph with $n$ vertices.

\begin{thm}\label{main1}
Let $G$ be a connected graph of order $n$ and $k\geq2$ be an integer. Each of the following holds.\\
(i) If $n\geq {\rm max}\{6k+5, k^{2}+\frac{3}{2}k+2\}$ and $\rho(G)\geq \rho(K_{1}\vee (K_{n-k-1}+kK_{1})),$
then $G$ has a spanning $k$-ended-tree unless $G\cong K_{1}\vee(K_{n-k-1}+kK_{1})$.\\
(ii) If $n\geq {\rm max}\{6k+5, \frac{3}{2}k^{2}+\frac{3}{2}k+2\}$ and $q(G)\geq q(K_{1}\vee (K_{n-k-1}+kK_{1})),$
then $G$ has a spanning $k$-ended-tree unless $G\cong K_{1}\vee(K_{n-k-1}+kK_{1})$.\\
(iii) If $\rho(\overline{G})\leq \sqrt{k(n-2)},$ then $G$ has a spanning $k$-ended-tree unless $G\cong K_{1,k+1}$.\\
(iv) If $q(\overline{G})\leq n+k-2$ and $G\notin R(t, t-\frac{n+k}{2})\vee K_{n-t} ~(\frac{n+k}{2}\leq t\leq n),$
then $G$ has a spanning $k$-ended-tree.
\end{thm}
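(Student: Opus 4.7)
The unifying strategy for all four parts is to combine Theorem~\ref{le1} with the contrapositive of Theorem~\ref{th1}: if $G$ has no spanning $k$-ended-tree, then neither does the $(n-1)$-closure $H := C_{n-1}(G)$, so there exist nonadjacent $u,v \in V(H)$ with $d_H(u)+d_H(v)\leq n-k$; moreover, every nonadjacent pair in $H$ satisfies $d_H(x)+d_H(y)\leq n-2$ by the very definition of the closure. Parts (i) and (ii) then exploit the monotonicity $\rho(G)\leq \rho(H)$ and $q(G)\leq q(H)$ (since $G\subseteq H$), while parts (iii) and (iv) exploit the reverse inclusion $\overline{H}\subseteq\overline{G}$, giving $\rho(\overline{G})\geq \rho(\overline{H})$ and $q(\overline{G})\geq q(\overline{H})$.

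For parts (i) and (ii), let $u$ be the smaller-degree vertex of the pair, write $d:=d_H(u)\in\{1,\ldots,\lfloor(n-k)/2\rfloor\}$, and partition $V(H)$ into $\{u\}$, $N_H(u)$ and $V(H)\setminus N_H[u]$. The closure condition forces every vertex of the third cell to have degree at most $n-2-d$ in $H$, so $H$ is a spanning subgraph of a concrete join-type candidate $H_d^{\ast}$ indexed by $d$, and at $d=1$ this candidate is exactly $K_1\vee(K_{n-k-1}+kK_1)$. Computing $\rho(H_d^{\ast})$ and $q(H_d^{\ast})$ through the quotient matrix of the natural equitable three-cell partition and comparing across admissible $d$ then identifies the maximum at $d=1$. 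The main obstacle is verifying a strictly positive spectral gap for every $d\geq 2$; this is precisely where the hypotheses $n\geq\max\{6k+5,\,k^2+\tfrac{3}{2}k+2\}$ in (i) and $n\geq\max\{6k+5,\,\tfrac{3}{2}k^2+\tfrac{3}{2}k+2\}$ in (ii) are invoked, forcing the polynomial comparisons between the characteristic (respectively, $Q$-characteristic) polynomials to go through.

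For parts (iii) and (iv), the pair $u,v$ is now an edge of $\overline{H}$ with $d_{\overline{H}}(u)+d_{\overline{H}}(v)\geq n+k-2$. For (iv), I would apply a suitable lower bound on the signless Laplacian spectral radius in terms of the degree-sum along an edge to obtain $q(\overline{G})\geq q(\overline{H})\geq n+k-2$; the hypothesis $q(\overline{G})\leq n+k-2$ forces equality throughout, and a detailed equality analysis (combining the tight edge with the closure constraint on all other edges of $\overline{H}$) recovers the exceptional family $R(t,\,t-\tfrac{n+k}{2})\vee K_{n-t}$. For (iii), I would lower bound $\rho(\overline{H})$ by exhibiting a dense substructure: the common $H$-non-neighbourhood of $u$ has size at least $k$ and, by the closure, spans an edge-dense subgraph in $\overline{H}$; together with $u$ and $v$ this yields a $K_{k+1}$-like subgraph of spectral radius at least $k$, matching $\sqrt{k(n-2)}$ exactly at $n=k+2$, while for $n>k+2$ the additional vertices contribute strictly and push $\rho(\overline{H})$ above $\sqrt{k(n-2)}$. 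The principal obstacle for (iii) is the equality analysis, which requires tracing each inequality in the chain of lower bounds and verifying that full saturation collapses the parameters to $n=k+2$ and $G\cong K_{1,k+1}$.
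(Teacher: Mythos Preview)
Your plan for (i) and (ii) has a genuine gap. From the closure property alone you obtain only that each vertex of $V(H)\setminus N_H[u]$ has $H$-degree at most $n-2-d$; the parameter $k$ never enters this bound, so there is no reason the maximal supergraph $H_d^{\ast}$ under this single constraint should be $K_1\vee(K_{n-k-1}+kK_1)$ when $d=1$. (At $d=1$ the constraint is merely ``non-neighbours of $u$ have degree $\le n-3$'', and the spectrally maximal connected graph with a pendant $u$ subject to that is much denser than $K_1\vee(K_{n-k-1}+kK_1)$.) One nonadjacent pair with degree sum $\le n-k$ from Theorem~\ref{th1} does not manufacture $k$ pendant vertices. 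The paper takes an entirely different route: it converts the spectral hypothesis into an \emph{edge} lower bound via Hong's inequality (Lemma~\ref{le2}) for $\rho$ and the Das--Feng--Yu inequality (Lemma~\ref{le4}) for $q$, proves a structural lemma (Lemma~\ref{le11}) that an $(n-1)$-closed graph with at least ${n-k-1\choose 2}+k^2+k+1$ edges has $\omega(H)\ge n-k$, and then (Lemma~\ref{le12}, Claims~\ref{cla3}--\ref{cla6}) repeatedly uses the \emph{absence} of a spanning $k$-ended-tree to force that clique to sit inside $K_1\vee(K_{n-k-1}+kK_1)$. The thresholds $k^2+\tfrac32 k+2$ and $\tfrac32 k^2+\tfrac32 k+2$ arise from converting the crude spectral lower bounds $n-k-1$ and $2(n-k-1)$ into the edge bound needed by Lemma~\ref{le12}, not from any characteristic-polynomial comparison.

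For (iii) and (iv) your outline also diverges from the paper and, as written, does not close. The paper argues (via Theorem~\ref{th1} applied to $H$) that \emph{every} nonadjacent pair in $H$ satisfies $d_H(x)+d_H(y)\le n-k$, so that every edge of $\overline{H}$ has $d_{\overline{H}}(x)+d_{\overline{H}}(y)\ge n+k-2$; it then feeds this into the Li--Ning minimum-over-edges lower bounds $\rho(\overline{H})\ge\min_{xy\in E(\overline{H})}\sqrt{d_{\overline{H}}(x)d_{\overline{H}}(y)}$ and $q(\overline{H})\ge\min_{xy\in E(\overline{H})}(d_{\overline{H}}(x)+d_{\overline{H}}(y))$ (Lemmas~\ref{le3} and~\ref{le6}). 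Combined with $d_{\overline{H}}(x)\le n-2$ (from connectedness of $G$) this yields $d_{\overline{H}}(x)d_{\overline{H}}(y)\ge k(n-2)$ and the desired bounds at once. A single edge with high degree sum, which is all your framework supplies, cannot drive these minimum bounds. Your dense-substructure idea for (iii) delivers only a $K_{k+1}$-type estimate $\rho(\overline{H})\ge k$, which equals $\sqrt{k(n-2)}$ only at $n=k+2$ and gives nothing for larger $n$. Finally, the paper's equality analysis uses that equality in Lemmas~\ref{le3}/\ref{le6} forces the unique non-trivial component of $\overline{H}$ to be regular or semi-regular bipartite, from which the exceptional graphs $K_{1,k+1}$ and $R(t,\,t-\tfrac{n+k}{2})\vee K_{n-t}$ drop out directly; your substructure route offers no such equality characterisation.
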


Kaneko \cite{Kaneko2001} introduced the concept of leaf degree of a spanning tree.
Let $T$ be a spanning tree of a connected graph $G$. The leaf degree of a vertex $v\in V(T)$ is defined as the number of leaves adjacent to $v$ in $T$.
Furthermore, the leaf degree of $T$ is the maximum leaf degree among all the vertices of $T$.
They posed a necessary and sufficient condition for a connected graph to contain a spanning tree with leaf degree at most $k$.
Let $i(G-S)$ denote the number of isolated vertices of $G-S.$

\begin{thm}[Kaneko \cite{Kaneko2001}]\label{th2}
Let $G$ be a connected graph and $k\geq 1$ be an integer.
Then $G$ has a spanning tree with leaf degree at most $k$ if and only if
$i(G-S)<(k+1)|S|$ for every nonempty subset $S\subseteq V(G)$.
\end{thm}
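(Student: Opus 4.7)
The plan is to prove the necessity and sufficiency directions separately; necessity reduces to a short counting argument, while sufficiency will require a careful extremal/exchange argument.

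For necessity, let $T$ be a spanning tree of $G$ with leaf degree at most $k$, fix a nonempty $S \subseteq V(G)$, and let $I$ be the set of isolated vertices of $G-S$. Every $v \in I$ satisfies $N_G(v) \subseteq S$, so in particular all $T$-neighbors of $v$ lie in $S$. I would partition $I = I_L \sqcup I_N$ into the $T$-leaves in $I$ and the $T$-non-leaves in $I$. Since each vertex of $S$ carries at most $k$ leaf-neighbors in $T$, summing over $S$ gives $|I_L| \leq k|S|$. For $I_N$, the subgraph of $T$ induced on $S \cup I_N$ is a sub-forest on $|S|+|I_N|$ vertices; as $I_N \subseteq I$ is independent in $G$, every edge incident to $I_N$ in this sub-forest goes to $S$, and each $v \in I_N$ being a non-leaf contributes at least two such edges. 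Comparing the lower bound $2|I_N|$ against the forest bound $|S|+|I_N|-1$ yields $|I_N| \leq |S|-1$, and hence $i(G-S) = |I_L|+|I_N| \leq (k+1)|S|-1 < (k+1)|S|$.

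For sufficiency, assume the hypothesis holds. I would take a spanning tree $T$ of $G$ minimizing $\sum_{u} L_u(T)^2$ and derive a contradiction from the assumption that $T$ has leaf degree exceeding $k$. Put $S := \{u \in V(G) : L_u(T) \geq k+1\}$, which is nonempty. The $T$-leaves attached to $S$ number at least $(k+1)|S|$, so by the hypothesis $i(G-S) \leq (k+1)|S|-1$ there exists such a leaf $v$ with a $G$-neighbor $w \notin S$; let $u \in S$ be its unique $T$-neighbor. Then $L_w \leq k < L_u$, and the exchange replacing $uv$ by $wv$ produces a spanning tree $T'$ whose objective differs from that of $T$ by $2(L_w - L_u + 1) \leq 0$—strictly negative whenever $L_w \leq L_u - 2$.

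The main obstacle is the tight case $L_w = L_u - 1$, where the swap is objective-neutral: the leaf-degree multiset is invariant and only the identity of the high-degree vertex moves. To push through, my plan is to refine the objective with a secondary lexicographic tiebreaker---for instance, the sorted vector of non-leaf tree-degree information at high-leaf-degree vertices, or a potential weighted by tree-distance from a fixed root---so that an infinite chain of neutral swaps cannot cycle and must eventually unlock a strict improvement. If this extremal manoeuvre proves too delicate, a backup strategy is to encode the existence of a spanning tree with leaf degree at most $k$ as a degree-constrained factor in an auxiliary graph obtained by attaching a suitable gadget to each vertex of $G$, and then to extract Kaneko's defect inequality as the translated Tutte-type condition for that factor.
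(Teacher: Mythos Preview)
The paper does not prove this statement at all: Theorem~\ref{th2} is quoted from Kaneko's paper \cite{Kaneko2001} and used as a black box in the proof of Theorem~\ref{main2}. There is therefore no ``paper's own proof'' to compare your proposal against.

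On the merits of your proposal itself: the necessity direction is clean and correct. Your forest-edge count on $T[S\cup I_N]$ indeed gives $|I_N|\le |S|-1$, and together with $|I_L|\le k|S|$ you obtain $i(G-S)\le (k+1)|S|-1$.

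The sufficiency direction, however, is not a proof but a plan with an explicitly acknowledged gap. In the tight case $L_w=L_u-1$ (which, since $L_w\le k$ and $L_u\ge k+1$, forces $L_u=k+1$ and $L_w=k$), the swap $T\mapsto T-uv+wv$ merely relocates the offending vertex from $u$ to $w$ without changing the multiset of leaf degrees, so your primary potential $\sum_u L_u(T)^2$ is unchanged. Your suggested secondary tiebreakers (``sorted vector of non-leaf tree-degree information'', ``potential weighted by tree-distance from a fixed root'') are not specified precisely enough to verify that they strictly decrease under the swap, and it is not obvious that any natural choice does. Also note that your computation of the objective change silently assumes that $w$ is not a leaf of $T$ and that $u$ does not become a leaf of $T'$; in general the swap can alter the leaf degree of a third vertex (the former $T$-neighbour of $w$, or the remaining $T$-neighbour of $u$), so the bookkeeping is more delicate than the single line you wrote. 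Your fallback idea of encoding the problem as a degree-constrained factor with a Tutte-type defect condition is closer in spirit to how results of this kind are usually established, but as stated it is only a pointer, not an argument.
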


In this paper, we consider the following spectral extremal problem.

\begin{prob}
Let $G$ be a connected graph of order $n$ which has no a spanning tree with leaf degree at most $k$. What is the maximum spectral radius (or signless Laplacian spectral radius) of $G?$
Moreover, characterize all the extremal graphs.
\end{prob}

Motivated by Kaneko's theorem, we present tight spectral conditions
for the existence of a spanning tree with leaf degree at most $k$ in a connected graph.

\begin{thm}\label{main2}
Let $G$ be a connected graph of order $n\geq 2k+12$, where $k\geq1$ is an integer and $a\in \{0, 1\}$.
If $\rho_{a}(G)\geq \rho_{a}(K_{1}\vee (K_{n-k-2}+(k+1)K_{1})),$ then $G$ has a spanning tree with leaf degree at most $k$
unless $G\cong K_{1}\vee (K_{n-k-2}+(k+1)K_{1})$.
\end{thm}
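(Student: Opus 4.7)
The plan is to argue by contradiction using Kaneko's Theorem \ref{th2}. First I observe that the extremal graph $H=K_{1}\vee(K_{n-k-2}+(k+1)K_{1})$ itself contains no spanning tree of leaf degree at most $k$: taking $S$ to be the vertex of the dominating $K_1$ gives $|S|=1$ and $i(H-S)=k+1=(k+1)|S|$, violating the strict inequality in Theorem \ref{th2}. So the theorem asserts $H$ is the unique ``barrier'' graph with maximum $A_a$-spectral radius for $a\in\{0,1\}$ when $n\geq 2k+12$.

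Assume for contradiction that $G\not\cong H$ is a connected graph of order $n$ with $\rho_{a}(G)\geq \rho_{a}(H)$ and with no spanning tree of leaf degree at most $k$. By Theorem \ref{th2} there is a nonempty $S\subseteq V(G)$ with $i(G-S)\geq (k+1)|S|$; let $s=|S|$, let $I$ be the isolated vertices of $G-S$, set $i=|I|\geq(k+1)s$ and $V_1=V(G)\setminus(S\cup I)$, so $|V_1|=n-s-i$. Then $G$ is a spanning subgraph of $K_s\vee(K_{n-s-i}+iK_1)$ and, since $A_a$ is nonnegative and irreducible on a connected graph,
$$\rho_{a}(G)\leq \rho_{a}\bigl(K_s\vee(K_{n-s-i}+iK_1)\bigr),$$
with equality only when $G=K_s\vee(K_{n-s-i}+iK_1)$.

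Next I would establish two monotonicity comparisons. (a) For fixed $s$, relocating a vertex from $iK_1$ into the clique $K_{n-s-i}$ strictly adds $n-s-i$ edges, so by Perron--Frobenius the $A_a$-spectral radius strictly increases; hence over the admissible range $i\geq(k+1)s$ the maximum is attained at $i=(k+1)s$. (b) Writing
$$f_a(s)=\rho_{a}\bigl(K_s\vee(K_{n-(k+2)s}+(k+1)sK_1)\bigr),$$
I would prove $f_a(s)<f_a(1)=\rho_{a}(H)$ for all integers $s$ with $2\leq s\leq\lfloor n/(k+2)\rfloor$. This is done by writing down the $3\times 3$ quotient matrix of $A_a$ for the equitable partition with parts of sizes $s$, $n-(k+2)s$, $(k+1)s$ and comparing its Perron root with that of the analogous quotient matrix at $s=1$. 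Combining (a) and (b) yields $\rho_{a}(G)\leq \rho_{a}(H)$ with equality forcing $s=1$, $i=k+1$ and $G=H$, contradicting $G\not\cong H$.

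The main obstacle is step (b). While (a) is a clean Perron--Frobenius monotonicity valid uniformly in $a\geq 0$, in (b) the Perron root $f_a(s)$ is defined implicitly as the largest root of a cubic whose coefficients are polynomials of degree up to two in $s$, and the comparison must be carried out simultaneously for $a=0$ and $a=1$. My strategy is to evaluate the characteristic polynomial $p_{a,s}(x)$ of the quotient matrix at $x=\rho_{a}(H)$ and show $p_{a,s}(\rho_{a}(H))>0$ for all $s\geq 2$; since the cubic is monic and $\rho_{a}(H)$ exceeds any eigenvalue of the quotient matrix other than its Perron root, this gives $f_a(s)<\rho_{a}(H)$. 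The linear hypothesis $n\geq 2k+12$ is calibrated precisely to make the resulting polynomial inequalities in $n$, $k$, $s$ hold for every admissible $s\geq 2$ and for both values of $a$.
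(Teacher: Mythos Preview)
Your overall architecture coincides with the paper's: assume $G$ has no such spanning tree, invoke Kaneko's Theorem~\ref{th2} to obtain a set $S$ with $i(G-S)\ge(k+1)|S|$, pass to the supergraph $K_s\vee(K_{n-(k+2)s}+(k+1)sK_1)$, treat $s=1$ as the equality case, and show that $s\ge 2$ forces the spectral radius strictly below $\rho_a(H)$. Your step (a) is exactly the monotonicity the paper uses implicitly when it jumps straight to $i=(k+1)s$.

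The genuine divergence is in step (b). The paper does \emph{not} compare quotient matrices or characteristic polynomials. Instead it bounds $\rho_a(G')$ through the edge count: for $a=0$ it uses Hong's inequality $\rho(G')\le\sqrt{2e(G')-n+1}$ (Lemma~\ref{le2}), and for $a=1$ it uses the bound $q(G')\le \tfrac{2e(G')}{n-1}+n-2$ (Lemma~\ref{le4}). Since $e(G')=\binom{n-(k+1)s}{2}+(k+1)s^2$ is an explicit quadratic in $s$, this reduces the comparison to checking that a concrete quadratic in $s$ on $[2,\,n/(k+2)]$ stays below $(n-k-2)^2$ (resp.\ gives $q<2(n-k-2)$), and the hypothesis $n\ge 2k+12$ is exactly what makes these elementary inequalities go through. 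The final step is then $\rho_a(G')<(a+1)(n-k-2)=\rho_a(K_{n-k-1})<\rho_a(H)$.

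Your quotient-matrix route is legitimate and in principle more flexible (it could handle other values of $a$ uniformly), but it is heavier: you must manipulate a one-parameter family of cubics and, as you note yourself, you still need an auxiliary argument that $\rho_a(H)$ lies to the right of the \emph{second} root of $p_{a,s}$ before positivity of $p_{a,s}(\rho_a(H))$ yields $f_a(s)<\rho_a(H)$; that step is asserted but not supplied. The paper's edge-count approach sidesteps all of this by never computing $\rho_a(H)$ at all, only the trivial lower bound $(a+1)(n-k-2)$.
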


\section{Proof of Theorem \ref{main1}}

Before presenting our main result, we first show that an $(n-1)$-closed connected graph $G$ must contain a large clique
if its number of edges is large enough.

\begin{lem}\label{le11}
Let $H$ be an $(n-1)$-closed connected graph of order $n\geq 6k+5$, where $k\geq2$ is an integer.
If $$e(H)\geq {n-k-1\choose 2}+k^{2}+k+1,$$ then $\omega(H)\geq n-k$.
\end{lem}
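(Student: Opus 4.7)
I argue by contradiction: assume $\omega(H)\le n-k-1$, fix a maximum clique $K$ with $\omega:=|K|$, and set $R=V(H)\setminus K$, so that $|R|=n-\omega\ge k+1$.

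The starting point is to translate $(n-1)$-closedness into a uniform degree bound on $R$. For any $v\in R$, the maximality of $K$ produces some non-neighbor $u\in K$; since $u\in K$ we have $d(u)\ge\omega-1$, and the closure condition $d(u)+d(v)\le n-2$ yields $d(v)\le n-\omega-1$. Summing over $R$ and using $\sum_{v\in R}d(v)=e(K,R)+2e(R)$, this gives the basic edge bound
\[
e(H)=\binom{\omega}{2}+e(K,R)+e(R)\;\le\;\binom{\omega}{2}+(n-\omega)(n-\omega-1).
\]
The argument then splits on $\omega$. In the boundary case $\omega=n-k-1$, this reduces at once to $e(H)\le\binom{n-k-1}{2}+k(k+1)$, which is strictly less than $\binom{n-k-1}{2}+k^{2}+k+1$ and so contradicts the hypothesis immediately.

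For $\omega\le n-k-2$, the previous bound is no longer tight throughout the whole range, and I would supplement it with two further ingredients. First, each $v\in R$ has at most $\omega-1$ neighbors in $K$ (otherwise $K\cup\{v\}$ is a larger clique), so $e(K,R)\le(n-\omega)(\omega-1)$. Second, $H[R]$ is $K_{\omega+1}$-free, so by Tur\'an's theorem $e(R)\le\mathrm{ex}(n-\omega,K_{\omega+1})$. Viewing $(e(K,R),e(R))$ as a point constrained by these two inequalities together with the degree-sum inequality $e(K,R)+2e(R)\le(n-\omega)(n-\omega-1)$, a linear-programming-style maximization of $e(K,R)+e(R)$ produces an explicit $\omega$-dependent upper bound on $e(H)$. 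A (somewhat tedious but routine) calculation using $n\ge 6k+5$ is then expected to show that this upper bound is strictly smaller than $\binom{n-k-1}{2}+k^{2}+k+1$ for every $\omega\in\{1,\ldots,n-k-2\}$, which completes the proof.

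The main obstacle is this second case in its intermediate range of $\omega$, where neither the closure degree bound nor Tur\'an is individually sharp; only their joint effect forces the contradiction, and this is precisely where the numerical hypothesis $n\ge 6k+5$ is used. In the most delicate regime one may additionally need the following structural consequence: if the degree-sum inequality $e(K,R)+2e(R)=(n-\omega)(n-\omega-1)$ is close to being saturated, then many vertices of $K$ must be adjacent to all of $R$, which in turn forces $R$ to be nearly independent (otherwise two adjacent vertices of $R$ together with those $K$-vertices produce a $K_{\omega+1}$), and this extra constraint on $e(R)$ suffices to close the remaining gap.
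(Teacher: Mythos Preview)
Your reduction for the boundary value $\omega=n-k-1$ is fine and matches the paper's endpoint computation. The difficulty is the range $\omega\le n-k-2$, where your proposed linear program in $(e(K,R),e(R))$ constrained by
\[
e(K,R)\le(n-\omega)(\omega-1),\qquad e(R)\le\mathrm{ex}(n-\omega,K_{\omega+1}),\qquad e(K,R)+2e(R)\le(n-\omega)(n-\omega-1)
\]
is \emph{not} strong enough. Take $k=2$, $n=6k+5=17$, and $\omega=5$. The three constraints become $x\le48$, $y\le57$, $x+2y\le132$, and the LP maximum of $x+y$ is $90$, attained at $(48,42)$. Hence your bound yields only
\[
e(H)\le\binom{5}{2}+90=100,
\]
whereas the hypothesis is $e(H)\ge\binom{14}{2}+7=98$. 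No contradiction is obtained. Your final ``delicate regime'' paragraph does not close this gap either: near-saturation of $e(K,R)+2e(R)$ says that vertices of $R$ have degree close to $n-\omega-1$, which gives information about non-neighbours of $R$-vertices, not about $K$-vertices being complete to $R$; in particular the inference ``many vertices of $K$ must be adjacent to all of $R$'' is not justified as stated.

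What you are missing is a second, independent consequence of $(n-1)$-closedness: any two vertices of degree at least $(n-1)/2$ are automatically adjacent, so the set of such vertices is itself a clique and therefore has size at most $\omega$. Consequently, at least $n-\omega$ vertices satisfy $d(v)\le(n-2)/2$, a bound that is \emph{much} stronger than your $d(v)\le n-\omega-1$ when $\omega$ is small. The paper exploits exactly this: it chooses the clique $C$ so as to contain all high-degree vertices, and for $r=|C|\le n/3+k+1$ it combines $d_C(u)\le r-1$ with $d_H(u)\le(n-2)/2$ for $u\notin C$ to obtain
\[
e(H)\le\binom{r}{2}+\tfrac{(r-1)(n-r)}{2}+\tfrac{(n-2)(n-r)}{4},
\]
which for $k=2$, $n=17$, $r=5$ gives $e(H)\le79$, comfortably below $98$. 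For $r>n/3+k+1$ the paper uses precisely your first displayed bound $e(H)\le\binom{r}{2}+(n-r)(n-r-1)$ and checks both endpoints of this range. Thus your argument is correct for large $\omega$ but needs the high-degree-clique observation (not Tur\'an) to handle small $\omega$; the threshold $n\ge6k+5$ is calibrated to make that observation suffice.
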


\begin{proof}
For any two vertices $u,v\in V(H)$ with degree at least $\frac{n-1}{2}$, we have $d_{H}(u)+d_{H}(v)\geq n-1.$ Note that $H$ is an $(n-1)$-closed graph.
Then any two vertices of degree at least $\frac{n-1}{2}$ must be adjacent in $H$.
Let $C$ be the vertex set of a maximum clique of $H$ which contains all vertices of degree at least $\frac{n-1}{2},$
and $F$ be the subgraph of $H$ induced by $V(H)\setminus C.$ Let $|C|=r$. Then $|V(F)|=n-r$.
Next we will evaluate the value of $r.$

\vspace{1.5mm}
\noindent\textbf{Case 1.} $1 \leq r\leq\frac{n}{3}+k+1.$
\vspace{1.5mm}

Note that $$e(H[C])={r\choose 2}, e(C, V(F))=\sum_{u\in V(F)}d_{C}(u)~~\mbox{and}~~e(F)=\frac{\sum_{u\in V(F)}d_{H}(u)-\sum_{u\in V(F)}d_{C}(u)}{2}.$$

\begin{claim}\label{cla1}
{\rm $d_{H}(u)\leq \frac{n-2}{2}$ and $d_{C}(u)\leq r-1$ for each $u\in V(F)$.}
\end{claim}

\begin{proof}
Suppose to the contrary that $d_{H}(u)\geq\frac{n-1}{2}$ or $d_{C}(u)\geq r$ for each $u\in V(F)$.
Assume first that $d_{H}(u)\geq\frac{n-1}{2}$. Then we have $d_{H}(u)+d_{H}(v)\geq n-1$ for each $v\in C$. Note that $H$ is an $(n-1)$-closed graph.
Then $u$ is adjacent to every vertex of $C,$ and hence $C\cup\{u\}$ is a larger clique, which contradicts the maximality of $C$.
Note that $|C|=r$. If $d_{C}(u)\geq r$, then $d_{C}(u)=r,$ and hence $u$ is adjacent to every vertex of $C.$
It follows that $C\cup\{u\}$ is a larger clique, a contradiction.
\end{proof}

By Claim \ref{cla1}, we have
\begin{eqnarray*}
e(H)&=&e(H[C])+e(C,V(F))+e(F)\\
&=&{r\choose 2}+\frac{\sum_{u\in V(F)}d_{C}(u)+\sum_{u\in V(F)}d_{H}(u)}{2}\\
&\leq&{r\choose 2}+\frac{(r-1)(n-r)}{2}+\frac{(n-2)(n-r)}{4}\\
&=&(\frac{n}{4}+\frac{1}{2})r+\frac{n^{2}}{4}-n\\
&\leq&(\frac{n}{4}+\frac{1}{2})(\frac{n}{3}+k+1)+\frac{n^{2}}{4}-n\\
&=&\frac{n^{2}}{3}+(\frac{k}{4}-\frac{7}{12})n+\frac{k}{2}+\frac{1}{2}\\
&<& {n-k-1\choose 2}+k^{2}+k+1,
\end{eqnarray*}
for $n\geq 6k+5$. This contradicts $e(H)\geq {n-k-1\choose 2}+k^{2}+k+1.$

\vspace{1.5mm}
\noindent\textbf{Case 2.} $\frac{n}{3}+k+1 <r\leq n-k-1.$
\vspace{1.5mm}

Note that $$e(H[C])={r\choose 2} ~~\mbox{and}~~ e(C,V(F))+e(F)\leq \sum_{u\in V(F)}d_{H}(u).$$

\begin{claim}\label{cla2}
{\rm $d_{H}(u)\leq n-r-1$ for each $u\in V(F)$.}
\end{claim}

\begin{proof}
Suppose that $d_{H}(u)\geq n-r$ for each $u\in V(F)$.
Then we have $d_{H}(u)+d_{H}(v)\geq (n-r)+(r-1)=n-1$ for each $v\in C$. Note that $H$ is $(n-1)$-closed.
Then $u$ is adjacent to every vertex of $C$. This implies that $C\cup\{u\}$ is a larger clique, a contradiction.
\end{proof}

By Claim \ref{cla2}, we have
\begin{eqnarray*}
e(H)&=&e(H[C])+e(C,V(F))+e(F)\\
&\leq&{r\choose 2}+\sum_{u\in V(F)}d_{H}(u)\\
&\leq&{r\choose 2}+(n-r)(n-r-1)\\
&=&\frac{3}{2}r^{2}-(2n-\frac{1}{2})r+n^{2}-n\\
&\triangleq&f(r).
\end{eqnarray*}
Note that $f(r)$ is a concave function on $r.$
Since $\lfloor\frac{n}{3}\rfloor+k+2 \leq t\leq n-k-1,$ then we have
$$e(H)\leq \max\{f(\lfloor\frac{n}{3}\rfloor+k+2),f(n-k-1)\}={n-k-1\choose 2}+k^{2}+k<e(H),$$
for $n\geq 3k+5$, a contradiction.

By Cases 1 and 2, we know that $r\geq n-k,$ and hence $\omega(H)\geq r\geq n-k$. This completes the proof.
\end{proof}

With the help of Lemma \ref{le11}, we prove a technical sufficient condition in terms of $e(G)$ to assure that $G$ has a spanning $k$-ended-tree.

\begin{lem}\label{le12}
Let $G$ be a connected graph of order $n\geq {\rm max}\{6k+5, k^{2}+k+2\}$, where $k\geq2$ is an integer.
If $$e(G)\geq {n-k-1\choose 2}+k^{2}+k+1,$$ then $G$ has a spanning $k$-ended-tree unless $C_{n-1}(G)\cong K_{1}\vee(K_{n-k-1}+kK_{1})$.
\end{lem}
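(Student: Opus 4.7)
The plan is to combine Theorem \ref{le1} with Lemma \ref{le11} and then finish by an explicit tree construction. First, by Theorem \ref{le1}, $G$ has a spanning $k$-ended-tree if and only if $H := C_{n-1}(G)$ does, so it suffices to work with $H$. Since the closure operation only adds edges, $e(H) \geq e(G) \geq \binom{n-k-1}{2}+k^2+k+1$, and $H$ is $(n-1)$-closed by definition; therefore Lemma \ref{le11} applies and gives $\omega(H) \geq n-k$. Let $C$ be a maximum clique of $H$, set $O = V(H)\setminus C$ with $s := |O| = n-|C| \leq k$, and put $S = \{c \in C : c \text{ has a neighbor in } O\}$. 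Connectedness of $H$ forces $S \neq \emptyset$ whenever $s \geq 1$.

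Next, I would construct a spanning tree of $H$ by taking a Hamilton path $P$ of the clique $C$ (always available) and attaching the outsiders to $P$ as pendants or short chains. In the resulting tree, the leaves are precisely the outsider-pendants together with those endpoints of $P$ that are not absorbed by an outsider attachment. Choosing the endpoints of $P$ to lie in $S$ whenever possible, and using any edge inside $H[O]$ to chain two outsiders (so that one of them becomes an internal vertex), I aim to keep the total number of leaves at most $k$.

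A short case analysis on $s$ and $|S|$ then finishes the argument. If $s \leq k-1$, placing one endpoint of $P$ in $S$ yields at most $s+1 \leq k$ leaves. If $s=k$ and $|S|\geq 2$, placing both endpoints of $P$ in $S$ and attaching an outsider to each endpoint gives exactly $k$ leaves. If $s=k$, $|S|=\{c^{*}\}$, and $H[O]$ contains an edge $v_1v_2$, then chaining $v_2$ off $v_1$ (instead of attaching both as pendants of $c^{*}$) saves one leaf and again yields at most $k$ leaves. The only remaining case is $s=k$, $|S|=\{c^{*}\}$, and $H[O]$ edgeless; connectivity then forces every outsider to be a pendant adjacent only to $c^{*}$, so $H \cong K_1 \vee (K_{n-k-1}+kK_1)$, which is exactly the exceptional graph of the statement.

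The main technical obstacle is the subcase $s=k$, $|S|=\{c^{*}\}$ with a nontrivial $H[O]$: outsiders that have no direct neighbor in $C$ must be routed to $c^{*}$ through paths inside $H[O]$, and one must verify component by component that the final leaf count stays bounded by $k$. The bound $d_H(v)\leq n-|C|-1\leq k-1$ for $v\in O$ (from Claim \ref{cla2}) and the fact that each component of $H[O]$ must contain a vertex adjacent to $c^{*}$ are the key inputs that make the chaining always feasible and keep the accounting tight.
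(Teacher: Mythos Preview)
Your proposal is correct and follows essentially the same strategy as the paper: pass to the closure $H$, invoke Lemma~\ref{le11} to obtain $\omega(H)\ge n-k$, and then analyse the small set $O=V(H)\setminus C$ to either build a spanning $k$-ended-tree or force $H\cong K_1\vee(K_{n-k-1}+kK_1)$. The paper organises that last analysis as a chain of four claims proved by contradiction (in particular Claim~\ref{cla5} uses a rotation trick on the Hamilton path), whereas you do a direct constructive case split on $s=|O|$, $|S|$, and whether $H[O]$ has an edge. Both routes arrive at the same exceptional graph; yours is arguably more transparent because it builds the tree explicitly rather than repeatedly reaching contradictions.

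Two small points of looseness in your sketch are worth tightening. First, in the case $s=k$, $|S|\ge 2$ you assert ``attaching an outsider to each endpoint gives exactly $k$ leaves'', but this tacitly assumes every outsider has a neighbour in $C$; if $H[O]$ has edges some outsiders may be reachable only through other outsiders. The cleanest fix is to split on whether $H[O]$ has an edge \emph{before} splitting on $|S|$: whenever $H[O]$ has an edge, one endpoint in $S$ already suffices, since any spanning tree of a component $O_i$ with $|O_i|\ge 2$ contributes at most $|O_i|-1$ leaves after the attachment vertex loses leaf status, giving $\le 1+(k-1)=k$ leaves in total. Second, the degree bound $d_H(v)\le k-1$ from Claim~\ref{cla2} that you cite as a ``key input'' is actually not needed once you do the component-by-component leaf count just described; the argument goes through by pure counting.
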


\begin{proof}
Suppose that $G$ has no a spanning $k$-ended-tree, where $n\geq {\rm max}\{6k+5, k^{2}+k+2\}$ and $k\geq2$.
Let $H=C_{n-1}(G)$. It suffices to prove that $H\cong K_{1}\vee(K_{n-k-1}+kK_{1}).$

By Theorem \ref{le1}, $H$ has no a spanning $k$-ended-tree either.
Note that $G\subseteq H$. Since $e(G)\geq {n-k-1\choose 2}+k^{2}+k+1$, then $e(H)\geq {n-k-1\choose 2}+k^{2}+k+1$.
According to Lemma \ref{le11}, we have $\omega(H)\geq n-k$.

Next we will characterize the structure of $H$.
Let $C$ be a maximum clique of $H$ and $F$ be a subgraph of $H$ induced by $V(H)\backslash C.$
First we claim that $V(F)\neq \emptyset$. If $V(F)= \emptyset$, then $H\cong K_{n}$.
Obviously, $H$ has a spanning $k$-ended-tree, a contradiction.

\begin{claim}\label{cla3}
{\rm $\omega(H)=n-k$.}
\end{claim}

\begin{proof}
Recall that $\omega(H)\geq n-k$. It suffices to prove that $\omega(H)\leq n-k$. Assume to the contrary that $\omega(H)\geq n-k+1$. Let $C'$ be an $(n-k+1)$-clique of $H$ and $F'$ be a subgraph of $H$ induced by $V(H)\backslash C'$. Then $|V(F')|=k-1>0$ because of $k\geq2.$
Note that $G$ is a connected graph.
Then $H$ is also connected, and hence there exists a vertex $v\in V(F')$ which is adjacent to a vertex in $V(C').$
We take a path $P$ such that $V(P)=V(C')\cup \{v\}.$
Note that $|V(F')-v|=k-2$. Therefore, $H$ has a spanning tree with at most $k$ leaves,
which means that $H$ has a spanning $k$-ended-tree, a contradiction.
So we have $\omega(H)=n-k$, as we claimed.
\end{proof}

By Claim \ref{cla3}, we know that $C\cong K_{n-k}$. Let $V(C)= \{u_{1} ,u_{2}, \ldots, u_{n-k}\}$ and $V(F)= \{v_{1} ,v_{2}, \ldots, v_{k}\}$.

\begin{claim}\label{cla4}
{\rm $V(F)$ is an independent set.}
\end{claim}

\begin{proof}
Assume that $v_{i}$ is adjacent to $v_{j}$, where $v_{i}, v_{j}\in V(F)$.
By the connectedness of $H$, there exists a path from $v_{i}$ to $C$.
Thus, we can take the path $P$ such that $V(C)\cup \{v_{i}, v_{j}\} \subseteq V(P)$.
Note that $|V(H)\setminus V(P)|\leq k-2$. Then $H$ has a spanning tree with at most $k$ leaves.
That is, $H$ has a spanning $k$-ended-tree, a contradiction.
\end{proof}

\begin{claim}\label{cla5}
{\rm $d_{H}(v_{i})=1$ for every $v_{i}\in V(F).$}
\end{claim}

\begin{proof}
Suppose there exists a vertex $v_{i}\in V(F)$ with $d_{H}(v_{i})\geq 2.$ According to Claim \ref{cla4},
without loss of generality, we can assume that $v_{i}$ is adjacent to $u_{1}$ and $u_{2}$, where $u_{1}, u_{2}\in V(C).$
We take the path $P=u_{1}v_{i}u_{2}u_{3}\cdots u_{n-k}$ of length $n-k$.

Furthermore, we claim that $v_{j}~(j\neq i)$ must be adjacent to $u_{2}$ for each $v_{j}\in V(F)$.
In fact, if $v_{j}$ is adjacent to $u_{1}$ or $u_{n-k}$, then there exists a path $P'=P+v_{j}u_{1}$ or $P+u_{n-k}v_{j}$ of length $n-k+1$.
If $v_{j}~(j\neq i)$ is adjacent to $u_{s}$, where $3\leq s\leq n-k-1$. Then there exists a path
$P'=P-u_{s-1}u_{s}+u_{s-1}u_{n-k}+u_{s}v_{j}$ of length $n-k+1$. By the above proof, we can always find a path $P'$ with $V(P')=V(C)\cup \{v_{i}, v_{j}\}.$
Note that $|V(F)-v_{i}-v_{j}|=k-2$.
Hence $H$ has a spanning tree with at most $k$ leaves.
This implies that $H$ has a spanning $k$-ended-tree, a contradiction. Hence $v_{j}~(j\neq i)$ must be adjacent to $u_{2}$ for every $v_{j}\in V(F)$.

At this time, we can observe that $H$ has a spanning tree $T=P-u_{2}u_{3}+u_{1}u_{n-k}+\sum _{j\neq i}u_{2}v_{j}$ such that $L(T)= V(F)\setminus\{v_{i}\}\cup \{u_{3}\}$, where $L(T)$ denotes the set of leaves of $T.$ Note that $|L(T)|=k$.
Hence $H$ has a spanning $k$-ended-tree, a contradiction.
\end{proof}

\begin{claim}\label{cla6}
{\rm $N_{H}(v_{i})\cap C=N_{H}(v_{j})\cap C,$ where $i\neq j$.}
\end{claim}

\begin{proof}
By Claims \ref{cla4} and \ref{cla5}, we assume that $N_{H}(v_{i})\cap C=u_{i}$ and $N_{H}(v_{j})\cap C=u_{j},$ where $i\neq j$.
Then there exists a path $P$ such that $V(P)= V(C)\cup \{v_{i}, v_{j}\}$ and $L(P)=\{v_{i}, v_{j}\}$, where $L(P)$ is the set of leaves of $P.$
Note that $|V(F)-v_{i}- v_{j}|=k-2$.
Note that $H$ is a connected graph. Then $H$ has a spanning $k$-ended-tree, a contradiction.
Hence $N_{H}(v_{i})\cap C=N_{H}(v_{j})\cap C,$  where $i\neq j$.
\end{proof}

\begin{figure}
\centering
% Requires \usepackage{graphicx}
\includegraphics[width=0.25\textwidth]{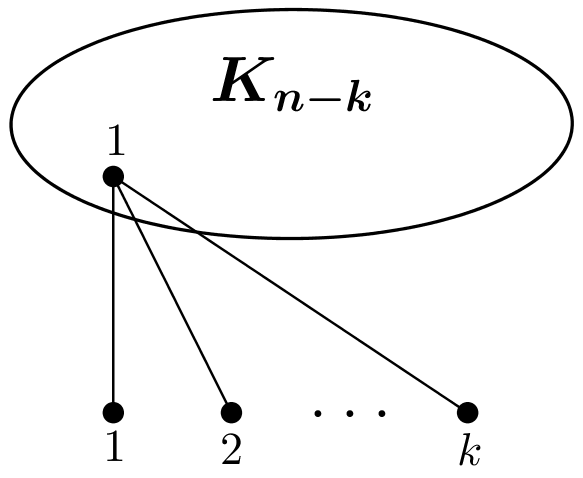}\\
\caption{Graph $K_{1}\vee (K_{n-k-1}+kK_{1}).$
}\label{fig1}
\end{figure}

By the above claims, we know that $H\cong K_{1}\vee (K_{n-k-1}+kK_{1})$ (see Fig. \ref{fig1}).
Note that the vertices of $kK_{1}$ are only adjacent to a vertex of $K_{n-k}$.
Then the number of leaves of a spanning tree of $K_{1}\vee (K_{n-k-1}+kK_{1})$ is at least $k+1$.
This implies that $K_{1}\vee (K_{n-k-1}+kK_{1})$ has no a spanning $k$-ended-tree.
Note that $e(H)={n-k\choose 2}+k$ and $n\geq k^{2}+k+2$. Then we have
$e(H)\geq {n-k-1\choose 2}+k^{2}+k+1.$
Therefore, $H=C_{n-1}(G)\cong K_{1}\vee (K_{n-k-1}+kK_{1}),$ as desired.
\end{proof}

Next we will present important upper and lower bounds of $\rho(G)$ and $q(G)$ that will be used in our subsequent arguments.

\begin{lem}[Hong \cite{Hong1988}]\label{le2}
Let $G$ be a connected graph with $n$ vertices. Then
$$\rho(G)\leq \sqrt{2e(G)-n+1}.$$
\end{lem}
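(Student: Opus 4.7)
The statement is Hong's classical 1988 bound, sharp at both $K_n$ and the star $K_{1,n-1}$, each achieving $\rho^2 = 2e - n + 1$. My strategy is to reduce the inequality to a sharp lower bound on the sum of squares of the non-Perron eigenvalues of $A(G)$.

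By the Perron--Frobenius theorem, since $G$ is connected, $\rho = \rho(G)$ is a simple eigenvalue of $A(G)$ with a strictly positive unit eigenvector. Listing the eigenvalues $\lambda_1 = \rho \geq \lambda_2 \geq \cdots \geq \lambda_n$ and invoking the trace identities $\operatorname{tr}(A(G)) = 0$ and $\operatorname{tr}(A(G)^2) = 2e(G)$, I obtain
$$\sum_{i=2}^{n}\lambda_i = -\rho \qquad\text{and}\qquad \sum_{i=2}^{n}\lambda_i^2 \;=\; 2e(G) - \rho^2.$$
Consequently, the claimed inequality $\rho^2 \leq 2e(G) - n + 1$ is equivalent to proving the spectral estimate
$$\sum_{i=2}^{n}\lambda_i^2 \;\geq\; n - 1 \qquad \text{for every connected graph on } n \text{ vertices}.$$
I note that a naive Cauchy--Schwarz on $(\lambda_2,\ldots,\lambda_n)$ against the all-ones vector gives only $\rho^2 \leq (n-1)\sum_{i\geq 2}\lambda_i^2$, hence the weaker bound $\rho^2 \leq 2e(n-1)/n$, so connectedness must be used in an essential way.

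The main obstacle is securing the sharp estimate $\sum_{i=2}^{n}\lambda_i^2 \geq n-1$. Because equality must hold simultaneously for $K_n$ (whose non-Perron eigenvalues are all $-1$) and for $K_{1,n-1}$ (whose non-Perron spectrum is $0,\ldots,0,-\sqrt{n-1}$), no purely convexity-based argument can succeed at both extremal graphs. My two candidate routes are: (i) pick a spanning tree $T$ of $G$ (which exists since $G$ is connected) and apply Cauchy interlacing together with explicit spectral information about $T$ to force the required $n-1$ units of quadratic mass onto $\lambda_2,\ldots,\lambda_n$; or (ii) analyze the quadratic form identity $(\mathbf{1}^{T}x)^2 = x^{T}Jx = 1 + \rho + x^{T}A(\overline{G})x \leq n$ for the Perron eigenvector $x$, thereby relating the deficiency $n-1$ directly to the complement. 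Once the inequality $\sum_{i\geq 2}\lambda_i^2 \geq n - 1$ is established, the bound $\rho^2 \leq 2e(G) - (n-1)$ follows immediately, completing the proof.
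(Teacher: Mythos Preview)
The paper does not prove this lemma; it simply quotes Hong's 1988 result and uses it as a black box. So there is no in-paper argument to compare your proposal against, and I will assess your plan on its own merits and against Hong's original argument.

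Your reduction is correct: the trace identities indeed give $\sum_{i\ge 2}\lambda_i^{2}=2e(G)-\rho^{2}$, so Hong's bound is equivalent to the spectral estimate $\sum_{i\ge 2}\lambda_i^{2}\ge n-1$. The problem is that your plan stops precisely where the real work begins. You name two ``candidate routes'' but execute neither, and in fact neither is viable as you describe it. Route~(i) appeals to ``Cauchy interlacing'' between $G$ and a spanning tree $T$, but Cauchy interlacing compares a matrix with its \emph{principal submatrices} (vertex deletion), not with edge-subgraphs on the same vertex set; there is no interlacing relation that would let you transport the easy tree estimate $\sum_{i\ge 2}\lambda_i(T)^{2}=2(n-1)-\rho(T)^{2}\ge n-1$ (which follows from bipartiteness, since $\lambda_n(T)=-\rho(T)$) over to $G$. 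Route~(ii) yields only $x^{T}A(\overline{G})x\le n-1-\rho$, and you give no mechanism for converting this into $\rho^{2}\le 2e(G)-n+1$. As it stands, the key inequality is asserted but not proved.

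For comparison, Hong's actual argument is short and bypasses the spectral reformulation entirely. With $x$ the unit Perron eigenvector one has
\[
\rho^{2}=\|Ax\|^{2}=\sum_{i}\Bigl(\sum_{j\sim i}x_{j}\Bigr)^{2}\le\sum_{i}d_{i}\sum_{j\sim i}x_{j}^{2}=\sum_{j}x_{j}^{2}\sum_{i\sim j}d_{i}\le\max_{j}\sum_{i\sim j}d_{i}.
\]
Connectedness is used only via $\delta(G)\ge 1$: for every vertex $j$,
\[
\sum_{i\sim j}d_{i}=2e(G)-d_{j}-\sum_{i\notin N[j]}d_{i}\le 2e(G)-d_{j}-(n-1-d_{j})=2e(G)-n+1,
\]
and the bound follows. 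If you want to salvage your approach, you would need an independent proof of $\sum_{i\ge 2}\lambda_i^{2}\ge n-1$ that does not secretly reprove Hong; at present the proposal does not supply one.
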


\begin{lem}[Li and Ning \cite{Li2016}]\label{le3}
Let $G$ be a graph with non-empty edge set. Then
$$\rho(G)\geq {\rm min}\{\sqrt{d(u)d(v)}:uv\in E(G)\}.$$
Moreover, if $G$ is connected, then equality holds if and only if $G$ is regular or semi-regular bipartite.
\end{lem}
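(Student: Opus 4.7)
The plan is to apply the Rayleigh--Ritz principle with the trial vector $f\colon V(G)\to\mathbb{R}$ defined by $f(v):=\sqrt{d(v)}$, which is nonzero because $E(G)\neq\emptyset$, and then reduce the bound to the elementary fact that an arithmetic mean dominates the minimum. Since $A(G)$ is real symmetric with largest eigenvalue $\rho(G)$, I would write
$$\rho(G)\;\geq\;\frac{f^{T}A(G)f}{f^{T}f}\;=\;\frac{2\sum_{uv\in E(G)}\sqrt{d(u)d(v)}}{\sum_{v\in V(G)}d(v)}\;=\;\frac{1}{e(G)}\sum_{uv\in E(G)}\sqrt{d(u)d(v)},$$
using $\sum_{v}d(v)=2e(G)$. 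The rightmost quantity is the unweighted average of $\sqrt{d(u)d(v)}$ over edges, which is at least $\min\{\sqrt{d(u)d(v)}:uv\in E(G)\}$, giving the inequality claimed.

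For the equality case with $G$ connected, assume $\rho(G)=\min\{\sqrt{d(u)d(v)}:uv\in E(G)\}$. Then both estimates above are equalities. The ``mean equals minimum'' step forces $\sqrt{d(u)d(v)}$ to be a constant equal to $\rho(G)$ on every edge, hence $d(u)d(v)=\rho(G)^{2}$ along each edge. The Rayleigh equality, together with $f>0$, forces $f$ to be a Perron eigenvector, i.e.\ $A(G)f=\rho(G)f$; substituting $\sqrt{d(u)}=\rho(G)/\sqrt{d(v)}$ for each neighbor $u$ of $v$ shows this eigenvector equation is automatically satisfied, so all structural content is encoded in $d(u)d(v)=\rho(G)^{2}$ on edges.

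To turn this into the regular/semi-regular-bipartite dichotomy, I would fix any $v_{0}\in V(G)$, set $d:=d(v_{0})$, and induct on the distance from $v_{0}$: every neighbor of $v_{0}$ must have degree $\rho(G)^{2}/d$, every vertex at distance $2$ must have degree $d$ again, and so on. By connectedness $G$ has at most two degree values, $d$ and $\rho(G)^{2}/d$, alternating along every edge. If $d=\rho(G)^{2}/d$ then $G$ is $\rho(G)$-regular; otherwise the even/odd-distance partition is a proper bipartition in which one side is $d$-regular and the other $\rho(G)^{2}/d$-regular, so $G$ is semi-regular bipartite. The converse directions are routine: for $d$-regular $G$ one has $\rho(G)=d$ and $\sqrt{d(u)d(v)}=d$ for every edge, while for an $(r,s)$-semi-regular bipartite graph the vector $f$ is itself an eigenvector with eigenvalue $\sqrt{rs}$, and $\sqrt{d(u)d(v)}=\sqrt{rs}$ on every edge.

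The only delicate point is the equality analysis; the main trap is circularity in using $A(G)f=\rho(G)f$, which I sidestep by first extracting the edge condition $d(u)d(v)=\rho(G)^{2}$ from the ``average equals minimum'' step and then deducing the global structure purely from that pointwise relation plus connectedness. Everything else is a one-line Rayleigh computation.
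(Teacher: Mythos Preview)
The paper does not supply its own proof of this lemma; it is quoted verbatim from Li and Ning and used as a black box. Your Rayleigh-quotient argument with the trial vector $f(v)=\sqrt{d(v)}$ is correct and is in fact the standard route to this inequality: the computation $\rho(G)\ge f^{T}A(G)f/f^{T}f=\frac{1}{e(G)}\sum_{uv\in E}\sqrt{d(u)d(v)}\ge\min_{uv\in E}\sqrt{d(u)d(v)}$ is exactly right, and your equality analysis---extracting $d(u)d(v)=\rho(G)^{2}$ on every edge from ``average equals minimum'' and then propagating degrees along a BFS from a base vertex---cleanly yields the regular/semi-regular-bipartite dichotomy. The only place to tighten the write-up is the bipartition step: you should say explicitly that if the two candidate degrees $d$ and $\rho(G)^{2}/d$ are distinct then no vertex can lie at both even and odd distance from $v_{0}$ (else it would carry both degrees), and no edge can join two vertices of the same parity (else $d^{2}=\rho(G)^{2}$ forces $d=\rho(G)^{2}/d$), so the distance-parity classes really do form a bipartition.
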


\begin{lem}[Das \cite{Das2004}, Feng and Yu \cite{Feng2009}]\label{le4}
Let $G$ be a connected graph on $n$ vertices and $e(G)$ edges. Then
$$q(G)\leq \frac{2e(G)}{n-1}+n-2.$$
\end{lem}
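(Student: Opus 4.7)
The plan is to exploit the complementary identity
$$Q(G)+Q(\overline{G})=(n-2)I+J,$$
where $I$ is the identity matrix and $J$ the all-ones matrix; this follows at once from $D(G)+D(\overline{G})=(n-1)I$ and $A(G)+A(\overline{G})=J-I$. The matrix $(n-2)I+J$ has top eigenvalue $2(n-1)$ with eigenvector $\mathbf{1}$, and eigenvalue $n-2$ of multiplicity $n-1$ on $\mathbf{1}^{\perp}$.

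Let $x$ denote the unit Perron eigenvector of $Q(G)$; since $G$ is connected, $Q(G)$ is irreducible and $x>0$. Evaluating the quadratic form $x^{T}(\cdot)x$ on both sides of the complementary identity gives
$$q(G)=(n-2)+(\mathbf{1}^{T}x)^{2}-x^{T}Q(\overline{G})x,$$
so matters reduce to establishing $(\mathbf{1}^{T}x)^{2}-x^{T}Q(\overline{G})x\leq \frac{2e(G)}{n-1}$. Decomposing $x=\alpha\mathbf{1}/\sqrt{n}+z$ with $z\perp\mathbf{1}$ and $\alpha^{2}+\|z\|^{2}=1$ yields $(\mathbf{1}^{T}x)^{2}=n\alpha^{2}$, and expanding $x^{T}Q(\overline{G})x$ produces a diagonal contribution $\alpha^{2}\cdot\frac{4e(\overline{G})}{n}$, a cross term involving $d_{G}^{T}z$ (using $Q(\overline{G})\mathbf{1}=2(n-1)\mathbf{1}-2d_{G}$ together with $\mathbf{1}^{T}z=0$), and the nonnegative remainder $z^{T}Q(\overline{G})z\geq 0$.

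The main obstacle is controlling this cross term, which is precisely where the denominator $n-1$ rather than $n$ emerges. Handling it via Cauchy--Schwarz together with $\|z\|\leq\sqrt{1-\alpha^{2}}$ reduces the problem to a univariate optimization in $\alpha\in[0,1]$, whose maximum yields the stated inequality $q(G)\leq \frac{2e(G)}{n-1}+n-2$. Equality throughout forces $z^{T}Q(\overline{G})z=0$ and equality in Cauchy--Schwarz, pinning $G$ down to a regular graph, which is consistent with $K_{n}$ saturating the bound. An alternative route, as in Feng and Yu, bypasses the explicit decomposition entirely by applying Lagrange multipliers to the Rayleigh quotient of $Q(G)$ under the constraint $\|x\|=1$; both approaches converge on the same final estimate.
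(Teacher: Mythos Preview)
The paper does not prove this lemma; it is quoted from Das and Feng--Yu and used as a black box. So the relevant question is simply whether your argument stands on its own.

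Your complementary identity $Q(G)+Q(\overline{G})=(n-2)I+J$ is correct, and the reduction to
\[
(\mathbf{1}^{T}x)^{2}-x^{T}Q(\overline{G})x\le \frac{2e(G)}{n-1}
\]
is valid. The gap is in the next step. You propose to drop $z^{T}Q(\overline{G})z\ge 0$, bound the cross term $\frac{4\alpha}{\sqrt{n}}\,z^{T}d_{G}$ by Cauchy--Schwarz via $|z^{T}d_{G}|\le \|z\|\cdot\|d_{G}-\bar d\,\mathbf{1}\|$, and then optimize over $\alpha$. This is too lossy. Take $G=K_{1,n-1}$, where the bound is attained with equality ($q=n$, $e=n-1$, $\frac{2e}{n-1}+n-2=n$). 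With the actual Perron data one has $\alpha^{2}=\frac{4(n-1)}{n^{2}}$ and $\|d_{G}-\bar d\,\mathbf{1}\|^{2}=\frac{(n-1)(n-2)^{2}}{n}$; plugging these into your relaxed expression gives
\[
\alpha^{2}\Bigl(\tfrac{4e}{n}-n+2\Bigr)+\frac{4\alpha\sqrt{1-\alpha^{2}}}{\sqrt{n}}\,\|d_{G}-\bar d\,\mathbf{1}\|
=\frac{4(n-1)(n^{2}-2n+4)}{n^{3}},
\]
which for $n\ge 5$ already exceeds $2=\frac{2e}{n-1}$. So the ``univariate optimization in $\alpha$'' cannot possibly close to the stated bound once you have discarded $z^{T}Q(\overline{G})z$ and replaced $z^{T}d_{G}$ by its Cauchy--Schwarz envelope; the two relaxations destroy exactly the cancellation that makes the star case tight.

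The same example shows that your equality analysis is wrong: $K_{1,n-1}$ saturates the bound but is neither regular nor semi-regular in the sense you need, so ``equality forces $G$ regular'' is false. The known equality cases are $K_{n}$ and $K_{1,n-1}$.

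The standard proofs proceed quite differently, via the Merris-type estimate $q(G)\le \max_{v}\{d(v)+m(v)\}$ (with $m(v)$ the average degree over the neighbours of $v$), followed by an elementary bound on $d(v)+m(v)$ in terms of $n$ and $e(G)$. If you want to push the complement identity through, you cannot separately relax the cross term and the quadratic term in $z$; they have to be handled together.
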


\begin{lem}[Li and Ning \cite{Li2016}]\label{le6}
Let $G$ be a graph with non-empty edge set. Then
$$q(G)\geq {\rm min}\{d(u)+d(v):uv\in E(G)\}.$$
Moreover, if $G$ is connected, then equality holds if and only if $G$ is regular or semi-regular bipartite.
\end{lem}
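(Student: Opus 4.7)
The plan is to realize $Q(G)$ through the vertex--edge incidence matrix of $G$ and then bound its Perron eigenvalue by the minimum row sum of a closely related nonnegative matrix. Let $B$ denote the $n \times e(G)$ signless incidence matrix, with $B_{v,e} = 1$ precisely when $v$ is an endpoint of $e$. A direct computation shows $Q(G) = BB^{T}$, so the nonzero eigenvalues of $Q(G)$ coincide with those of $M := B^{T}B$; in particular, $q(G) = \lambda_{\max}(M)$.

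Next, I would compute $M$ explicitly. Its diagonal entries are all equal to $2$, while for distinct edges $e$ and $f$ the $(e,f)$-entry equals the number of vertices shared by $e$ and $f$, namely $1$ when $e$ and $f$ have a common endpoint and $0$ otherwise. Hence the row sum of $M$ indexed by $e = uv$ equals
\[
2 + (d(u) - 1) + (d(v) - 1) = d(u) + d(v).
\]
Applying the classical Perron--Frobenius lower bound, which asserts that the largest eigenvalue of a nonnegative square matrix is at least its minimum row sum, yields
\[
q(G) = \lambda_{\max}(M) \geq \min\{d(u) + d(v) : uv \in E(G)\},
\]
which is the desired inequality.

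For the equality characterization, I would invoke irreducibility: when $G$ is connected, any two edges can be joined by a chain of pairwise adjacent edges, so $M$ is irreducible, and the minimum row sum of an irreducible nonnegative matrix equals its Perron eigenvalue precisely when every row sum is the same. Hence equality forces $d(u) + d(v)$ to be constant across $E(G)$. The main obstacle is deriving the structural conclusion from this degree-sum condition. I would fix a vertex $v_0$ of degree $d$; every neighbour must have degree $m - d$, where $m$ denotes the common edge degree-sum, and iterating along any walk forces vertex degrees to alternate between $d$ and $m - d$. If $G$ is bipartite, the two colour classes carry exactly these two degrees, so $G$ is semi-regular bipartite; otherwise $G$ contains an odd closed walk, which returns to its starting vertex only if $d = m - d$, forcing every vertex to have degree $m/2$ and $G$ to be regular. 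The converse is immediate in either case, since both regular graphs and semi-regular bipartite graphs manifestly have constant $d(u) + d(v)$ along edges, which closes the equivalence.
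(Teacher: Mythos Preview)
Your argument is correct. The paper itself does not prove this lemma; it simply quotes it from Li and Ning~\cite{Li2016}, so there is no ``paper's own proof'' to compare against. Your route via the incidence factorization $Q(G)=BB^{T}$ and the edge matrix $M=B^{T}B=2I+A(L(G))$ (the shifted adjacency matrix of the line graph) is a clean and standard way to obtain the bound: the row sum of $M$ at an edge $uv$ is exactly $d(u)+d(v)$, and Perron--Frobenius gives the inequality; connectedness of $G$ makes $L(G)$ connected, hence $M$ irreducible, which yields the equality characterization. The final step---constant edge degree-sum on a connected graph forces regularity or semi-regular bipartiteness---is handled correctly by your parity/odd-walk argument.
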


Let $A=(a_{ij})$ and $B=(b_{ij})$ be two $n\times n$ matrices.
Define $A\leq B$ if $a_{ij}\leq b_{ij}$ for all $i$ and $j$, and define $A< B$ if $A\leq B$ and $A\neq B$.

\begin{lem}[Berman and Plemmons \cite{Berman1979}, Horn and Johnson \cite{Horn1986}]\label{le5}
Let $A=(a_{ij})$ and $B=(b_{ij})$ be two $n\times n$ matrices with the spectral radii $\lambda(A)$ and $\lambda(B)$.
If $0\leq A\leq B$, then $\lambda(A)\leq \lambda(B)$.
Furthermore, if $B$ is irreducible and $0\leq A < B$, then $\lambda(A)<\lambda(B)$.
\end{lem}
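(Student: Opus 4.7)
The plan is to reduce both claims to Perron--Frobenius theory, handling the weak inequality first and then upgrading to a strict one via a subinvariance argument.

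First I would establish $\lambda(A)\le\lambda(B)$ using Gelfand's spectral radius formula
$$\lambda(M)=\lim_{k\to\infty}\|M^{k}\|^{1/k},$$
valid for any square matrix $M$ and any submultiplicative matrix norm. A one-line induction on $k$ starting from $0\le A\le B$ yields $0\le A^{k}\le B^{k}$ entrywise for every $k\ge 1$. Picking a norm that is monotone with respect to entrywise inequalities of nonnegative matrices (for instance the operator $\infty$-norm, equal to the maximum row sum) gives $\|A^{k}\|\le\|B^{k}\|$, and letting $k\to\infty$ produces $\lambda(A)\le\lambda(B)$.

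For the strict inequality under the extra hypotheses that $B$ is irreducible and $A<B$, I would argue by contradiction and assume $\lambda(A)=\lambda(B)=\rho$. Because $A$ is nonnegative, there is a nonzero $z\ge 0$ with $Az=\rho z$; then
$$Bz=Az+(B-A)z\ge\rho z.$$
The crucial step is the standard subinvariance lemma: for an irreducible nonnegative matrix $B$ with Perron root $\rho$, any $v\ge 0$, $v\ne 0$ satisfying $Bv\ge\rho v$ must in fact satisfy $Bv=\rho v$ and $v>0$. A short proof pairs $Bv-\rho v$ with the strictly positive left Perron eigenvector $y^{T}$ of $B$ (whose existence is exactly what irreducibility buys): since $y^{T}(Bv-\rho v)=0$ while $y>0$ and $Bv-\rho v\ge 0$, we are forced into $Bv=\rho v$, and then uniqueness of the Perron eigenvector up to scaling yields $v>0$. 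Applied to $z$ this gives $z>0$ together with $(B-A)z=0$. But $B-A$ is nonzero and nonnegative, so some row of $B-A$ has a positive entry, and with $z>0$ the corresponding coordinate of $(B-A)z$ is strictly positive, contradicting $(B-A)z=0$.

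The one nontrivial ingredient is the subinvariance step, which ultimately rests on the existence of a strictly positive left eigenvector for an irreducible nonnegative matrix; everything else is routine bookkeeping. Both cited monographs record precisely the assertions used here, so in the paper itself the lemma can justifiably be invoked as a black box.
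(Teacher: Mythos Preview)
Your argument is correct: the Gelfand-formula reduction for the weak inequality and the subinvariance/left-Perron-eigenvector contradiction for the strict one are exactly the standard Perron--Frobenius route, and each step checks out (in particular the induction $0\le A^{k}\le B^{k}$ and the conclusion $(B-A)z=0$ with $z>0$). Note, however, that the paper gives no proof of this lemma at all---it is stated with attributions to Berman--Plemmons and Horn--Johnson and used as a black box, precisely as you anticipate in your final sentence. So there is nothing to compare approaches against; you have simply supplied a self-contained justification where the paper relies on the literature.
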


Now, we are in a position to present the proof of Theorem \ref{main1}.

\medskip
\noindent  \textbf{Proof of Theorem \ref{main1}.}
(i) Let $G$ be a connected graph of order $n.$ Suppose to the contrary that $G$ has no a spanning $k$-ended-tree, where $n\geq {\rm max}\{6k+5, k^{2}+\frac{3}{2}k+2\}$ and $k\geq2.$
By Lemma \ref{le5}, we know that $\rho(K_{1}\vee (K_{n-k-1}+kK_{1}))>\rho(K_{n-k})=n-k-1$.
By Lemma \ref{le2} and the assumption of Theorem \ref{main1}, we have
$$n-k-1<\rho(K_{1}\vee (K_{n-k-1}+kK_{1})) \leq \rho(G)\leq \sqrt{2e(G)-n+1}.$$
Hence
$e(G)>\frac{(n-k-1)^{2}+n-1}{2}\geq {n-k-1\choose 2}+k^{2}+k+1$
for $n\geq k^{2}+\frac{3}{2}k+2$. Let $H=C_{n-1}(G)$.
By Lemma \ref{le12}, we have $H\cong K_{1}\vee (K_{n-k-1}+kK_{1}).$ Note that $G\subseteq H.$
Then we have $$\rho(G)\leq\rho(H)=\rho(K_{1}\vee (K_{n-k-1}+kK_{1})).$$
Combining the assumption $\rho(G)\geq \rho(K_{1}\vee (K_{n-k-1}+kK_{1}))$, we have $G\cong H.$
From the end of the proof in Lemma \ref{le12}, we know that $K_{1}\vee (K_{n-k-1}+kK_{1})$ has no a spanning $k$-ended-tree.
Hence $G\cong K_{1}\vee (K_{n-k-1}+kK_{1}),$ as desired.
\medskip

(ii) Let $G$ be a connected graph of order $n$ with $n\geq {\rm max}\{6k+5, \frac{3}{2}k^{2}+\frac{3}{2}k+2\}.$ Assume that $G$ has no a spanning $k$-ended-tree, where $k\geq2$ is an integer. By Lemma \ref{le5}, we have $q(K_{1}\vee (K_{n-k-1}+kK_{1}))>q(K_{n-k})=2(n-k-1).$
By Lemma \ref{le4} and the assumption, we have
  $$2(n-k-1)<q(K_{1}\vee (K_{n-k-1}+kK_{1})) \leq q(G)\leq \frac{2e(G)}{n-1}+n-2.$$
We can deduce that
$e(G)> \frac{[2(n-k-1)-n+2](n-1)}{2}\geq {n-k-1\choose 2}+k^{2}+k+1$
for $n\geq \frac{3}{2}k^{2}+\frac{3}{2}k+2$. Let $H=C_{n-1}(G)$.
By Lemma \ref{le12}, we have $H\cong K_{1}\vee (K_{n-k-1}+kK_{1})$.
Since $G\subseteq H$, then
$$q(G)\leq q(H)=q(K_{1}\vee (K_{n-k-1}+kK_{1})).$$
By the assumption, $q(G)\geq q(K_{1}\vee (K_{n-k-1}+kK_{1}))$, and hence $G\cong H$.
Recall that $K_{1}\vee (K_{n-k-1}+kK_{1})$ has no a spanning $k$-ended-tree.
Hence $G\cong K_{1}\vee (K_{n-k-1}+kK_{1}).$
\medskip

(iii) Let $G$ be a connected graph of order $n$ and $k\geq2$ be an integer. Let $H=C_{n-1}(G).$ If $H\cong K_{n},$ then $H$ has a spanning $k$-ended-tree.
By Theorem \ref{le1}, $G$ also has a spanning $k$-ended-tree, and the result follows.
Next we always assume that $H\ncong K_{n}.$

Suppose to the contrary that $G$ has no a spanning $k$-ended-tree.
By Theorem \ref{le1}, $H$ has no a spanning $k$-ended-tree either.
By Theorem \ref{th1}, we have
$$d_{H}(u)+d_{H}(v)\leq n-k$$
for every pair of nonadjacent vertices $u$ and $v$ (always exists) of $H$. Then we have
$$d_{\overline{H}}(u)+d_{\overline{H}}(v)=n-1-d_{H}(u)+n-1-d_{H}(v)\geq n+k-2$$
for every edge $uv\in E(\overline{H})$.
Notice that every non-trivial component of $\overline{H}$ has a vertex with the degree at least $\frac{n+k-2}{2}.$
Then its order is at least $\frac{n+k}{2}$, which implies that $\overline{H}$ has exactly one non-trivial component $F$
and $V(\overline{H}\backslash F)$ is the set of isolated vertices.
Hence $$d_{F}(u)+d_{F}(v)=d_{\overline{H}}(u)+d_{\overline{H}}(v)\geq n+k-2$$ for $uv\in E(F).$
Since $d_{H}(u)\geq d_{G}(u)\geq 1$ and $d_{H}(v)\geq d_{G}(v)\geq 1$, then we have $d_{F}(u)=d_{\overline{H}}(u)\leq n-2$ and $d_{F}(v)=d_{\overline{H}}(v)\leq n-2$.
Hence $d_{F}(u)\geq n+k-2-d_{F}(v)\geq k$ and $d_{F}(v)\geq n+k-2-d_{F}(u)\geq k.$
That is, $k \leq d_{F}(u)\leq n-2$ and $k \leq d_{F}(v)\leq n-2$.
It is easy to see that
 $$d_{F}(u)d_{F}(v)\geq d_{F}(u)(n+k-2-d_{F}(u))\triangleq f(d_{F}(u)).$$
Note that $f(d_{F}(u))$ is a convex function on $d_{F}(u)\in [k, n-2]$. Then
$$d_{\overline{H}}(u)d_{\overline{H}}(v)=d_{F}(u)d_{F}(v)\geq \mbox{min}\{f(k),f(n-2)\}=k(n-2).$$
for every edge $uv\in E(\overline{H})$.
By the assumption and Lemma \ref{le3}, we have
$$\sqrt{k(n-2)}\geq \rho(\overline{G}) \geq \rho(\overline{H})\geq \mbox{min}_{uv\in E(\overline{H})}\sqrt{d_{\overline{H}}(u)d_{\overline{H}}(v)}\geq\sqrt{k(n-2)}.$$
Then all the above inequalities must be equalities.
This implies that $\overline{G}\cong\overline{H}$, $\rho(\overline{H})=\sqrt{k(n-2)},$ and there exists an edge $uv\in E(\overline{H})$ such that
$d_{\overline{H}}(u)=k$ and $d_{\overline{H}}(v)= n-2$.
Note that $F$ is a unique non-trivial component of $\overline{H}.$ Then
$$\rho(F)=\rho(\overline{H})=\mbox{min}_{uv\in E(\overline{H})}\sqrt{d_{\overline{H}}(u)d_{\overline{H}}(v)}=
\mbox{min}_{uv\in E(F)}\sqrt{d_{F}(u)d_{F}(v)}.$$
By Lemma \ref{le3}, $F$ is regular or semi-regular bipartite.

Assume first that $F$ is semi-regular bipartite. By symmetry, we can assume that $d_{F}(u)=k$ and $d_{F}(v)=n-2$ for every edge $uv\in E(F)$. Hence
$F\cong K_{k,n-2}.$ It is easy to see that $n\geq k+(n-2),$ and thus $k=2$ since $k\geq 2$.
Then we have $\overline{H}=F\cong K_{2,n-2}$, and hence $G\cong H\cong K_{2}+K_{n-2}$,
which contradicts that $G$ is connected.

Hence $F$ is regular. Then $d_{F}(v)=k=n-2$ for each $v\in V(F),$ and hence $n=k+2$.
If $V(\overline{H}\backslash F)=\phi$, then $\overline{H}=F$ is an $(n-2)$-regular graph with $n$ vertices.
Obviously, $\overline{H}$ is obtained from $K_{n}$ by deleting a perfect matching of $\overline{H}$.
Then $G\cong H$ is the perfect matching of $G,$ which contradicts the connectedness of $G.$
Next we consider $V(\overline{H}\backslash F)\neq \phi$. Then $\overline{H}=F+K_{1},$ where $F\cong K_{k+1}$.
So we have $G\cong H\cong K_{1,k+1}$. It is obvious that $G$ has no a spanning $k$-ended-tree. Moveover, $\rho(\overline{G})=\rho(K_{k+1})=k=\sqrt{k(n-2)}.$
Hence $G\cong K_{1,k+1}$.

(iv) Let $H=C_{n-1}(G).$ By the beginning of the proof in (iii), we still assume that $H\ncong K_{n}.$ Suppose that $G$ has no a spanning $k$-ended-tree.
By Theorem \ref{le1}, $H$ has no a spanning $k$-ended-tree either. From the proof of (iii),
we can obtain that $\overline{H}$ has exactly one non-trivial component $F$ and
$$d_{F}(u)+d_{F}(v)\geq n+k-2$$
for each $uv\in E(F)$, where $k \leq d_{F}(u)\leq n-2$ and $k \leq d_{F}(v)\leq n-2$.
By the assumption and Lemma \ref{le6}, we have
$$n+k-2\geq q(\overline{G}) \geq q(\overline{H})=q(F)\geq \mbox{min}_{uv\in E(F)}(d_{F}(u)+d_{F}(v))\geq n+k-2.$$
Then all the above inequalities must be equalities, which implies that
$\overline{G}\cong\overline{H}$, $q(F)=n+k-2,$ and there exists an edge $uv\in E(F)$ such that $d_{F}(u)+d_{F}(v)= n+k-2$.
Furthermore, we have $q(F)= \mbox{min}_{uv\in E(F)}(d_{F}(u)+d_{F}(v))$.
By Lemma \ref{le6}, $F$ is regular or semi-regular bipartite.

If $F$ is semi-regular bipartite, then $F\cong K_{t, n+k-2-t}$ with $k\leq t\leq n-2.$
Since $n\geq t+(n+k-2-t)$ and $k\geq 2$, then we have $k=2$. Hence $\overline{H}=F\cong K_{t,n-t},$ where $k \leq t\leq n-2$.
Then $G\cong H\cong K_{t}+K_{n-t}$ with $k \leq t\leq n-2$, which contradicts the connectedness of $G$.

Hence $F$ is regular. Then $F$ is an $\frac{n+k-2}{2}$-regular graph with $t$ vertices, where $\frac{n+k}{2}\leq t\leq n$.
It is easy to see that $\overline{H}=F+(n-t)K_{1}$, and hence $G\cong H\cong \overline{F}\vee K_{n-t}$.
Note that $\overline{F}$ is an $(t-\frac{n+k}{2})$-regular graph with $t$ vertices.
For convenience, let $\overline{F}=R(t, t-\frac{n+k}{2})$. Then we can write
$$G\cong R(t, t-\frac{n+k}{2})\vee K_{n-t},$$
where $\frac{n+k}{2}\leq t\leq n,$ a contradiction.
The proof is completed. \hspace*{\fill}$\Box$

\section{Proof of Theorem \ref{main2}}

Recall the nonnegative matrix $A_{a}(G)=aD(G)+A(G)~(a\geq 0)$ of a graph $G$.
Let $\rho_{a}(G)$ be the $A_{a}$-spectral radius of $G.$ Obviously, $\rho_{0}(G)=\rho(G)$ and $\rho_{1}(G)=q(G).$
Now, we are ready to present the proof of Theorem \ref{main2}.

\medskip
\noindent  \textbf{Proof of Theorem \ref{main2}.}
Suppose to the contrary that $G$ has no a spanning tree with leaf degree at most $k$ for $n\geq 2k+12$ and $k\geq 1$.
By Theorem \ref{th2}, there exists some nonempty subset $S\subseteq V(G)$ such that $i(G-S)\geq (k+1)|S|.$
Let $|S|=s $. Then $G$ is a spanning subgraph of $G'=K_{s}\vee(K_{n-(k+2)s}+(k+1)sK_{1})$ (see Fig. \ref{fig2}).
By Lemma \ref{le5}, we have $\rho_{a}(G)\leq \rho_{a}(G')$ for $a\in \{0, 1\}$.
We distinguish the proof into the following two cases.

\vspace{1.5mm}
\noindent\textbf{Case 1.}  $s=1.$
\vspace{1mm}

Then $G'\cong K_{1}\vee(K_{n-k-2}+(k+1)K_{1}).$ From the above, we know that $$\rho_{a}(G)\leq \rho_{a}(K_{1}\vee(K_{n-k-2}+(k+1)K_{1})).$$
By the assumption $\rho_{a}(G)\geq \rho_{a}(K_{1}\vee(K_{n-k-2}+(k+1)K_{1}))$, then we have $G\cong K_{1}\vee(K_{n-k-2}+(k+1)K_{1}).$
Note that the vertices of $(k+1)K_{1}$ are only adjacent to a vertex of $K_{n-k-1}$.
Then the leaf degree of any spanning tree of $K_{1}\vee(K_{n-k-2}+(k+1)K_{1})$ is at least $k+1$.
This implies that $K_{1}\vee(K_{n-k-2}+(k+1)K_{1})$ has no a spanning tree with leaf degree at most $k$.
Hence $G\cong K_{1}\vee(K_{n-k-2}+(k+1)K_{1})$.

\vspace{1.5mm}
\noindent\textbf{Case 2.}  $s\geq 2.$
\vspace{1mm}

Note that $G'=K_{s}\vee(K_{n-(k+2)s}+(k+1)sK_{1})$ and $e(G')={n-(k+1)s\choose 2}+(k+1)s^{2}$.
Next we will discuss the different values of $a$.

\vspace{1.5mm}
\noindent\textbf{Subcase 2.1.}  $a=0.$
\vspace{1mm}

By Lemma \ref{le2}, we have
\begin{eqnarray*}
\rho_{0}(G')&\leq& \sqrt{2e(G')-n+1} \\
&=&\sqrt{(n-ks-s)(n-ks-s-1)+2(k+1)s^{2}-n+1}\\
&=&\sqrt{(k^{2}+4k+3)s^{2}-(2kn+2n-k-1)s+n^{2}-2n+1}\\
&\triangleq&\sqrt{f_{1}(s)}.
\end{eqnarray*}
Note that $n\geq s+(k+1)s=(k+2)s$. Then $2 \leq s\leq \frac{n}{k+2}.$ Moreover, we claim that $$\mbox{max}_{2 \leq s\leq \frac{n}{k+2}} f_{1}(s)=f_{1}(2).$$
In fact, let $g(n)=f_{1}(2)-f_{1}(\frac{n}{k+2})$, by a direct calculation, we have
$$g(n)= \frac{(k+1)^{2}n^{2}-(4k^{3}+21k^{2}+35k+18)n+4k^{4}+34k^{3}+102k^{2}+128k+56}{(k+2)^{2}}.$$
Note that $g(n)$ is a monotonically increasing function on $n\in[2k+12, +\infty)$. Then
$$f_{1}(2)-f_{1}(\frac{n}{k+2})=g(n)\geq g(2k+12)=\frac{24k^{2}+8k-16}{(k+2)^{2}}>0.$$
This implies that $\mbox{max}_{2 \leq s\leq \frac{n}{k+2}} f_{1}(s)=f_{1}(2).$
By the above proof, we have
\begin{eqnarray*}
\rho_{0}(G)\leq \rho_{0}(G')&\leq& \sqrt{f_{1}(2)} \\
&=&\sqrt{(n-k-2)^{2}-(2kn+2n-3k^{2}-14k-11)}\\
&\leq&\sqrt{(n-k-2)^{2}-(k^{2}+14k+13)}\\
&<&n-k-2,
\end{eqnarray*}
since $n\geq 2k+12$.
On the other hand, by the assumption and Lemma \ref{le5}, we have
$$\rho_{0}(G)\geq \rho_{0}(K_{1}\vee(K_{n-k-2}+(k+1)K_{1}))>\rho_{0}(K_{n-k-1})=n-k-2,$$ a contradiction.

\begin{figure}
\centering
% Requires \usepackage{graphicx}
\includegraphics[width=0.3\textwidth]{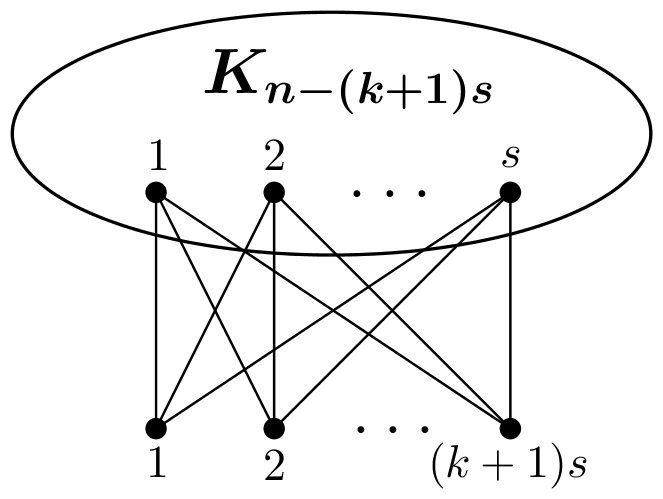}\\
\caption{Graph $K_{s}\vee(K_{n-(k+2)s}+(k+1)sK_{1})$.
}\label{fig2}
\end{figure}

\vspace{1.5mm}
\noindent\textbf{Subcase 2.2.}  $a=1.$
\vspace{1mm}

By Lemma \ref{le4}, we have
\begin{eqnarray*}
\rho_{1}(G')&\leq& \frac{2e(G')}{n-1}+n-2 \\
&=&\frac{(n-ks-s)(n-ks-s-1)+2(k+1)s^{2}}{n-1}+n-2\\
&=&\frac{(k^{2}+4k+3)s^{2}-(2kn+2n-k-1)s+2n^{2}-4n+2}{n-1}\\
&\triangleq& \frac{f_{2}(s)}{n-1}.
\end{eqnarray*}
Note that $2 \leq s\leq \frac{n}{k+2}$.
Similar to the proof of Subcase 2.1, we can obtain that $$\mbox{max}_{2 \leq s\leq \frac{n}{k+2}} f_{2}(s)=f_{2}(2).$$
Then
\begin{eqnarray*}
\rho_{1}(G)\leq \rho_{1}(G')&\leq&\frac{ f_{2}(2)}{n-1} \\
&=&2(n-k-2)-\frac{(2k+2)n-4k^{2}-16k-12}{n-1}\\
&\leq&2(n-k-2)-\frac{12k+12}{n-1}\\
&<&2(n-k-2),
\end{eqnarray*}
since $n\geq 2k+12$. However, by the assumption, we have
$$\rho_{1}(G)\geq \rho_{1}(K_{1}\vee(K_{n-k-2}+(k+1)K_{1}))>\rho_{1}(K_{n-k-1})=2(n-k-2),$$ a contradiction.
We complete the proof.
\hspace*{\fill}$\Box$

\vspace{5mm}
\noindent
{\bf Declaration of competing interest}
\vspace{3mm}

The authors declare that they have no known competing financial interests or personal relationships that could have appeared to influence the work reported in this paper.

%\noindent
%{\bf Acknowledgement}

%The authors would like to thank the anonymous referees for their helpful comments on improving the presentation of
%paper.

\end{document}